\newtheorem{thm}{Theorem}[section]
\newtheorem{prop}[thm]{Proposition}
\newtheorem{cor}[thm]{Corollary}
\newtheorem{lem}[thm]{Lemma}
\newtheorem{defn}{Definition}
\renewcommand{\phi}{\varphi}
\newcommand{\M}{\ensuremath{\mathcal{M}}}
\renewcommand{\O}{\ensuremath{\mathcal{O}}}
\newcommand{\omegaoneck}{\ensuremath{\omega_1^{CK}}}
\newcommand{\lomegaone}{\ensuremath{\mathcal{L}_{\omega_1,\omega}}}
\newcommand{\bethalpha}{\beths{\alpha}}
\newcommand{\bethomegaoneck}{\beths{\omegaoneck}}
\newcommand{\beths}[1]{\ensuremath{\beth_{#1}}}
\newcommand{\lang}[1]{\ensuremath{\mathcal{L}_{#1}}}
\newcommand{\bk}{\ensuremath{\mathbf{K}}}
\newcommand{\bbk}{\ensuremath{\mathbb{K}}}
\newcommand{\bkb}{\ensuremath{\mathbf{K}_b}}
\begin{document}

\title{Hanf number for Scott sentences\\ of computable structures}

\author{S.\ S.\ Goncharov, J.\ F.\ Knight and I.\ Souldatos}
\maketitle

\begin{abstract}

The \emph{Hanf number} for a set $S$ of sentences in $\lomegaone$ (or some other 
logic) is the least infinite cardinal $\kappa$ such that for all $\varphi\in S$, if $\varphi$ has 
models in all infinite cardinalities less than $\kappa$, then it has models of all infinite 
cardinalities.  S-D.\ Friedman asked what is the Hanf number for Scott sentences of computable  
structures.  We show that the value is $\bethomegaoneck$.  The same argument proves
that $\bethomegaoneck$ is the Hanf number for Scott sentences of hyperarithmetical  
structures.
\end{abstract}

\section{Introduction}

Scott \cite{S} showed that for any countable structure $\mathcal{A}$ for a countable vocabulary, 
there is a sentence of $L_{\omega_1\omega}$ whose countable models are exactly the isomorphic copies 
of $\mathcal{A}$.  Such a sentence is called a \emph{Scott sentence} for $\mathcal{A}$.  
In this paper, we show that the Hanf number for Scott sentences of computable structures is 
$\beth_{\omega_1^{CK}}$, where $\omega_1^{CK}$ is the first non-computable ordinal.  
We say that $\tau$ is a \emph{computable vocabulary} if the set of symbols is 
computable, and there is a computable function giving the arities.  

\begin{defn}  

Let $\tau$ be a computable vocabulary, and let $\mathcal{A}$ be a $\tau$-structure with universe a subset of $\omega$.  
The structure $\mathcal{A}$ is \emph{computable} if its atomic diagram, $D(\mathcal{A})$, is computable.  We think of the elements as constants, and we identify sentences with their G\"{o}del numbers, so that $D(\mathcal{A})$ is a subset of $\omega$.    

\end{defn} 

The paper splits into two parts. In Section 2, we prove the following 
theorem, which establishes $\bethomegaoneck$ as an upper bound for the Hanf number for Scott 
sentences of computable structures.

\begin{thm}
\label{Main}  

Let $\mathcal{A}$ be a computable structure for a computable vocabulary $\tau$, and let $\phi$ be a Scott sentence for $\mathcal{A}$.  If $\phi$ has models of cardinality $\beth_\alpha$ for all $\alpha < \omega_1^{CK}$, then it has models of all infinite cardinalities.  

\end{thm}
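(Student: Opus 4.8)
The plan is to run the classical Morley--L\'opez-Escobar indiscernibles argument for the Hanf number of \lomegaone, but calibrated so that the boundedness theorem of hyperarithmetic theory forces the critical ordinal to be \omegaoneck\ rather than \omegaone. The starting observation is that since $\mathcal{A}$ is computable, its Scott sentence $\phi$ may be taken to be a \emph{computable infinitary} sentence, i.e.\ a sentence of $\mathcal{L}_{\omegaoneck,\omega}$ in which the infinite conjunctions and disjunctions range over c.e.\ sets of formulas; in particular $\phi$ has quantifier rank equal to some computable ordinal $\gamma<\omegaoneck$. The truth of $\phi$ in a structure $\mathcal{M}$ is then determined by the infinitary back-and-forth relations $\equiv_\alpha$ between tuples for $\alpha\le\gamma$, and only these countably many relations are relevant.

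Next I would attach to $\phi$ the standard tree $T_\phi$ of finite approximations to a model of $\phi$ equipped with an increasing sequence of order-indiscernibles, where ``indiscernible'' is taken with respect to the relations $\equiv_\alpha$, $\alpha\le\gamma$. Two facts are needed, both provable by the usual Morley machinery. First, if $T_\phi$ is well-founded of rank $\rho$, then an Erd\H{o}s--Rado partition argument (the number of relevant colours being at most $2^{\aleph_0}$) bounds the cardinality of every model of $\phi$ by $\beths{\rho}$: a larger model would yield a set of indiscernibles long enough to climb past rank $\rho$ in $T_\phi$. Second, if $T_\phi$ is ill-founded, then an infinite branch produces a model of $\phi$ carrying a genuine infinite set of indiscernibles, which can be stretched in Ehrenfeucht--Mostowski fashion to any infinite cardinality while preserving all the relations $\equiv_\alpha$ with $\alpha\le\gamma$, hence preserving $\phi$; thus $\phi$ has models of all infinite cardinalities.

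The decisive point is that $T_\phi$ is built uniformly and effectively from the computable infinitary sentence $\phi$, so $T_\phi$ is a hyperarithmetical tree. By the boundedness theorem (Spector--Gandy / Nerode), a well-founded hyperarithmetical tree has rank below \omegaoneck. Combining this with the two facts, I would argue by contradiction: if $\phi$ did not have models of all infinite cardinalities, then by the second fact $T_\phi$ is well-founded, say of rank $\rho$, and by boundedness $\rho<\omegaoneck$; since \omegaoneck\ is a limit ordinal we also have $\rho+1<\omegaoneck$. By the first fact every model of $\phi$ has cardinality at most $\beths{\rho}$, contradicting the hypothesis that $\phi$ has a model of cardinality $\beths{\rho+1}$. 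Hence $\phi$ has models of all infinite cardinalities.

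The main obstacle will be the two ``Morley machinery'' facts in infinitary logic: setting up $T_\phi$ so that ill-founded branches really do give Ehrenfeucht--Mostowski data whose stretches still satisfy $\phi$ (this requires the indiscernibles to respect the whole Scott analysis up to rank $\gamma$, not merely first-order formulas), and carrying out the Erd\H{o}s--Rado bookkeeping that converts a large model into a long set of such indiscernibles. A secondary technical burden is verifying that $T_\phi$ is genuinely hyperarithmetical and that its rank is what controls the Hanf bound, so that the boundedness theorem can be invoked. Everything else is a matter of assembling these pieces together with the observation that \omegaoneck\ is a limit ordinal.
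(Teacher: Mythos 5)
Your proposal founders on its very first step. The claim that ``since $\mathcal{A}$ is computable, its Scott sentence $\phi$ may be taken to be a computable infinitary sentence'' is false, and this is precisely the difficulty the theorem exists to overcome. The standard counterexample, discussed in the paper's introduction, is the Harrison ordering, a computable linear order of type $\omega_1^{CK}(1+\eta)$: every computable infinitary sentence true in it is also true in the non-isomorphic countable ordering of type $\omega_1^{CK}$, so no computable infinitary sentence (equivalently, no sentence of the admissible fragment $\mathcal{L}_{A}$ for $A = L_{\omega_1^{CK}}$) can be a Scott sentence for it. By Nadel's theorem, a computable structure has a computable infinitary Scott sentence exactly when its Scott rank is below $\omega_1^{CK}$, and this can fail. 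If your first step were available, the whole theorem would follow at once from the Morley--Barwise theorem (Theorem \ref{thm2}), and indeed your remaining paragraphs are essentially a proof sketch of that known theorem (tree of indiscernible approximations, Erd\H{o}s--Rado, boundedness for well-founded hyperarithmetical trees); that machinery is sound, but it is the black box the paper simply cites, while the new content is exactly the part you assumed away. Concretely, your tree $T_\phi$ is hyperarithmetical only if $\phi$ itself is (up to equivalence) a hyperarithmetical object of quantifier rank below $\omega_1^{CK}$; for the Harrison ordering no Scott sentence has this form, so $T_\phi$ need not be hyperarithmetical and the boundedness theorem cannot be invoked.

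The repair is the paper's Lemma \ref{lem1}: rather than trying to make $\phi$ itself computable infinitary, one expands the vocabulary to $\tau^*$ with predicates $P_{\bar{a}}$ for each tuple $\bar{a}$ in $\mathcal{A}$, and writes down a c.e.\ set $T$ of computable $\Pi_2$ sentences (each $P_{\bar{a}}$ implies the finitary quantifier-free formulas true of $\bar{a}$; the $P_b$ cover the universe and are each realized; and a back-and-forth extension axiom for the $P_{\bar{a},b}$) such that a $\tau$-structure $\mathcal{B}$ satisfies $\phi$ if and only if $\mathcal{B}$ has an expansion satisfying $T$. The conjunction of $T$ \emph{is} a computable infinitary sentence in the still-computable vocabulary $\tau^*$, and only to it does the Morley--Barwise argument --- or your tree-and-boundedness version of it --- apply; the arbitrarily large models it produces then reduct to models of $\phi$. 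Note that this lemma also handles the fact that $\phi$ is an \emph{arbitrary} Scott sentence for $\mathcal{A}$: even if $\mathcal{A}$ happened to have a computable infinitary Scott sentence $\psi$, you would still owe an argument that $\phi$ and $\psi$ have the same uncountable models, which is not automatic from their agreement on countable models (it is itself a consequence of Lemma \ref{lem1}).
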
 

For an infinite cardinal $\kappa$ and an $L_{\omega_1\omega}$-sentence $\phi$, we say that $\phi$ \emph{characterizes} $\kappa$ if $\phi$ has a model of cardinality $\kappa$, but not in cardinality $\kappa^+$.  In Section 3, we exhibit specific examples of computable structures $\mathcal{A}_a$, corresponding to ordinal notations $a\in \O$, such that the Scott sentence of $\mathcal{A}_a$ characterizes $\beths{|a|}$, where $|a|$ is the ordinal with notation $a$.  This is Theorem \ref{ComputableBeths}.  Combining Theorems \ref{Main} and \ref{ComputableBeths}, we obtain the following.

\begin{thm} 

The Hanf number for Scott sentences of computable structures is equal to $\beths{\omegaoneck}$. 

\end{thm}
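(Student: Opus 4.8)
The plan is to read off the value of the Hanf number from the two main theorems, establishing separately that it is $\le \beths{\omegaoneck}$ and $\ge \beths{\omegaoneck}$. Write $h$ for the Hanf number of the set $S$ of Scott sentences of computable structures. I will show that the class of infinite cardinals satisfying the defining implication of the Hanf number is nonempty with least element exactly $\beths{\omegaoneck}$; since any nonempty class of cardinals has a least element, this pins down $h = \beths{\omegaoneck}$. Throughout I use that $\omegaoneck$ is a limit ordinal, so that $\beths{\omegaoneck} = \sup_{\alpha < \omegaoneck}\beths{\alpha}$ and, in particular, $\beths{\alpha} < \beths{\omegaoneck}$ for every $\alpha < \omegaoneck$. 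The substantive content lives in Theorems \ref{Main} and \ref{ComputableBeths}; what remains is to fit these together with the definition.

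For the upper bound, let $\phi$ be a Scott sentence of a computable structure and suppose $\phi$ has models in all infinite cardinalities below $\beths{\omegaoneck}$. Then for each $\alpha < \omegaoneck$ we have $\beths{\alpha} < \beths{\omegaoneck}$, so $\phi$ has a model of cardinality $\beths{\alpha}$. By Theorem \ref{Main}, $\phi$ has models in all infinite cardinalities. Thus the defining implication holds at $\kappa = \beths{\omegaoneck}$, and so $h \le \beths{\omegaoneck}$.

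For the lower bound, fix an arbitrary infinite cardinal $\kappa < \beths{\omegaoneck}$; I show that $\kappa$ fails the defining property, so that $h > \kappa$. Since $\beths{\omegaoneck}$ is the supremum of the $\beths{\alpha}$ for $\alpha < \omegaoneck$, there is some $\alpha < \omegaoneck$ with $\kappa \le \beths{\alpha}$; fix a notation $a \in \O$ with $|a| = \alpha$. By Theorem \ref{ComputableBeths}, the Scott sentence $\phi_a$ of the computable structure $\mathcal{A}_a$ characterizes $\beths{\alpha}$, i.e.\ it has a model of cardinality $\beths{\alpha}$ but none of cardinality $\beths{\alpha}^+$. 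By the downward L\"owenheim--Skolem theorem for $L_{\omega_1\omega}$, from the model of size $\beths{\alpha}$ we obtain models of $\phi_a$ in every infinite cardinality $\le \beths{\alpha}$, hence in every infinite cardinality $< \kappa$. On the other hand $\phi_a$ has no model of cardinality $\beths{\alpha}^+$, so it does not have models in all infinite cardinalities. Hence the defining implication fails for $\phi_a$ at $\kappa$, giving $h > \kappa$. As $\kappa < \beths{\omegaoneck}$ was arbitrary, $h \ge \beths{\omegaoneck}$, and combined with the previous paragraph $h = \beths{\omegaoneck}$.

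The only delicate point is the passage from ``$\phi_a$ characterizes $\beths{\alpha}$'', which by definition asserts only a model at $\beths{\alpha}$ and none at $\beths{\alpha}^+$, to the statement that $\phi_a$ has models in all infinite cardinalities below $\kappa$; this is exactly what the downward L\"owenheim--Skolem theorem for a sentence of the countable logic $L_{\omega_1\omega}$ supplies (and it also shows $\phi_a$ has no model of any cardinality exceeding $\beths{\alpha}$). Everything else is bookkeeping around the definition of the Hanf number together with the structural fact that $\beths{\omegaoneck}$ is a limit cardinal realized as the supremum of the $\beths{\alpha}$ with $\alpha < \omegaoneck$. I expect no genuine obstacle in this step, since the hard work has already been carried out in Theorems \ref{Main} and \ref{ComputableBeths}.
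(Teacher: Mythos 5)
Your proof is correct and follows the paper's own route exactly: the upper bound is read off from Theorem \ref{Main}, and the lower bound from the characterization of each $\beths{|a|}$ in Theorem \ref{ComputableBeths}; the paper simply states the theorem as the combination of these two results. Your only addition is making explicit the downward L\"owenheim--Skolem step for the countable fragment generated by $\phi_a$ (needed to get models in \emph{all} infinite cardinalities below $\kappa$, not just at $\beths{\alpha}$ and $\aleph_0$), which the paper leaves implicit.
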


The Hanf number for Scott sentences of hyperarithmetical structures is also equal to 
$\beth_{\omega_1^{CK}}$. 
The proof that we give for Theorem \ref{Main} also shows that the Hanf number for Scott sentences 
of hyperarithmetical structures is at most $\beth_{\omega_1^{CK}}$, and the Scott sentences of 
computable structures witness that it is at least $\beth_{\omega_1^{CK}}$.  (Similar reasoning would 
show that for a countable admissible set $A$ with ordinal $\gamma$, the Hanf number for Scott 
sentences of structures in $A$ is~$\beth_{\gamma}$.  We will not discuss this.)  

\bigskip

In the remainder of the introduction, we give some conventions and basic definitions, and we recall some well-known results.  

\subsection{Background in infinitary logic}

The following two results are given in \cite{K}.  The first result, proved independently by Morley and by L\'{o}pez-Escobar, says that the Hanf number for $\lomegaone$ is $\beth_{\omega_1}$.  

\begin{thm} [Morley, L\'{o}pez-Escobar] 
\label{thm1}

Let $\Gamma$ be a countable set of sentences of $\lomegaone$.  If $\Gamma$ has models of cardinality $\beth_\alpha$ for all 
$\alpha<\omega_1$, then it has models of all infinite cardinalities.  

\end{thm}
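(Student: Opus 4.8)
The plan is to follow the Morley method of extracting order-indiscernibles from the large models and then stretching. First I would replace \lomegaone{} by the least countable fragment $\lang{A}$ that contains every sentence of $\Gamma$ and is closed under subformulas; since $\Gamma$ is countable, $\lang{A}$ is countable. I would then Skolemize over $\lang{A}$: for each $\lang{A}$-formula $\exists y\,\psi(\bar x,y)$ adjoin a new function symbol together with its defining axiom, obtaining a countable theory $T$ such that every infinite model of $\Gamma$ expands to a model of $T\cup\Gamma$ of the same cardinality, and such that the Skolem hull of any subset is an $\lang{A}$-elementary substructure, hence again a model of $\Gamma$.

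Second, I would extract long well-ordered indiscernibles. The combinatorial engine is the Erd\H{o}s--Rado theorem: coloring the increasing $n$-tuples of a linearly ordered subset by their $\lang{A}$-type is a coloring in at most $2^{\aleph_0}$ colors, so by an iterated application one shows that for every $\alpha<\omegaone$ there is $\beta(\alpha)<\omegaone$ with the property that any model possessing an ordered subset of size $\geq\beths{\beta(\alpha)}$ contains a subset of order type $\alpha$ that is $\lang{A}$-order-indiscernible. The recursion defining $\beta$ stays below \omegaone{} because \omegaone{} is regular and closed under the countable ordinal operations involved. Since $\beta(\alpha)<\omegaone$ for each $\alpha<\omegaone$, the hypothesis supplies a model of $\Gamma$ of cardinality $\beths{\beta(\alpha)}<\beths{\omegaone}$, and from it I obtain a model $\M_\alpha\models T\cup\Gamma$ carrying $\lang{A}$-indiscernibles of order type $\alpha$. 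This step is exactly where having models at \emph{every} \bethalpha{} with $\alpha<\omegaone$ is consumed.

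Third, I would assemble a single blueprint. Reading off, from each $\M_\alpha$, the Ehrenfeucht--Mostowski type of the first $\omega$ of its indiscernibles gives a point of the space $2^{\lang{A}(C)}$, where $C$ is a countable set of ordered constants; since $\lang{A}(C)$ is countable this space is compact and metrizable, so the family $(\Phi_\alpha)_{\alpha<\omegaone}$ has a limit point $\Phi$. I would argue that $\Phi$ is a complete, finitely consistent blueprint and that, because it is approximated by EM-types coming from indiscernibles of arbitrarily large countable order type, its EM model over \emph{any} linear order $I$ is well defined. Taking $I$ of cardinality $\kappa$ and forming the Skolem hull then yields an $\lang{A}$-structure of cardinality $\kappa$ for every infinite $\kappa$.

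The main obstacle is the final verification that this stretched EM model actually satisfies $\Gamma$. Unlike the first-order case, where EM models of a theory automatically model the theory by compactness, \lomegaone{} is not compact, so I must check the infinitary sentences of $\Gamma$ directly, by induction on $\lang{A}$-formula complexity: the atomic, Boolean, and quantifier cases reduce to the blueprint and the Skolem functions, but the countable conjunction and disjunction cases, together with the need to decide these uniformly across the uncountable index set $I$, are where indiscernibility of unbounded countable order type is genuinely needed---and this is precisely what the limit blueprint $\Phi$ inherits from the whole family $(\M_\alpha)_{\alpha<\omegaone}$. Getting this induction to go through, and in particular confirming that no infinitary formula of $\Gamma$ is broken by the stretching, is the crux of the argument; once it is established, the EM models of $\Phi$ witness that $\Gamma$ has models in all infinite cardinalities.
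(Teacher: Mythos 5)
Your high-level plan (pass to a countable fragment, Skolemize, apply Erd\H{o}s--Rado, build an EM blueprint, stretch) is the same Morley-style argument the paper gestures at when it says the proof ``uses the Erd\H{o}s--Rado Theorem to produce a model of $\phi$ with an infinite indiscernible sequence, in a language with added Skolem functions''; the paper itself gives no details and simply cites Keisler \cite{K}. But your execution of the combinatorial engine is wrong. The claim in your second step --- that there is $\beta(\alpha)<\omega_1$ such that \emph{any} model with an ordered subset of size $\geq\beths{\beta(\alpha)}$ contains a subset of order type $\alpha$ that is order-indiscernible for \emph{all} $\lang{A}$-formulas --- is false. Erd\H{o}s--Rado homogenizes colorings of $n$-tuples for one fixed finite exponent $n$ at a time, and each application drops you finitely many levels down the $\beth$-hierarchy; inside a single model you can iterate finitely often, getting for each fixed $n$ a large set indiscernible for tuples of length $\leq n$, but you cannot pass to all $n$ simultaneously, because the descending chain of extracted sets may have finite intersection. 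Full order-indiscernibility of an infinite set with respect to countably many relations is equivalent to the partition relation $\kappa\rightarrow(\omega)^{<\omega}_{2^{\aleph_0}}$ with exponent $<\omega$, which is \emph{not} a consequence of Erd\H{o}s--Rado: it defines the Erd\H{o}s cardinals, whose existence is unprovable in ZFC, and whose least instance is inaccessible and hence exceeds every $\beth_\beta$ with $\beta<\omega_1$. This is exactly why the Hanf number of $\lomegaone$ is $\beth_{\omega_1}$ rather than $\beth_\omega$: no single model, however large, yields an infinite indiscernible sequence.

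The correct argument therefore never extracts infinite indiscernibles from one model. It works with $n$-indiscernibility for each fixed $n$ separately, in different models of the family $(\M_\alpha)_{\alpha<\omega_1}$, and it builds the blueprint $\Phi$ by deciding the countably many formulas of the Skolemized fragment one at a time, maintaining the invariant that the finitely many formulas decided so far are simultaneously realized, for every $\alpha<\omega_1$, by a suitably homogeneous set of size $\geq\beth_\alpha$ in some model of $\Gamma$. The countable-conjunction step you correctly identify as the crux is handled there by a coloring with \emph{countably many} colors (color an increasing tuple by the least $m$ such that $\psi_m$ fails, or by a special color if $\bigwedge_m\psi_m$ holds), so that Erd\H{o}s--Rado together with the regularity of $\omega_1$ --- pigeonholing countably many colors over uncountably many indices $\alpha$ --- lets one coherently add either $\bigwedge_m\psi_m$ or $\neg\psi_{m_0}$ for one specific $m_0$. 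Your substitute, taking a topological limit point $\Phi$ of complete blueprints $\Phi_\alpha$ in $2^{\lang{A}(C)}$, does not deliver this: a limit point agrees with some $\Phi_\alpha$ only on each finite set of formulas, and finite agreement cannot force the closure property your final induction needs (that $\psi_m\in\Phi$ for all $m$ implies $\bigwedge_m\psi_m\in\Phi$), since the $\Phi_\alpha$ witnessing the finite configuration containing $\neg\bigwedge_m\psi_m$ may disagree with $\Phi$ about which conjunct fails. A further sign the architecture is off: if your second step were true, the limit over $\alpha<\omega_1$ would be superfluous, since the realized blueprint of a single infinite indiscernible sequence already stretches to all infinite cardinalities by the standard EM induction.
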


The next result, proved independently by Morley and by Barwise, says that for a countable admissible set $A$ with ordinal $\gamma$, the Hanf number for the admissible fragment $\lang{A} = A\cap \lomegaone$ is at most $\beth_\gamma$.            

\begin{thm} [Morley, Barwise]
\label{thm2}

Let $A$ be a countable admissible set with $o(A) = \gamma$, and let $\phi$ be a sentence of $\lang{A}$.  If $\phi$ has models of cardinality $\beth_\alpha$ for all $\alpha <\gamma$, then it has models of all infinite cardinalities.    

\end{thm}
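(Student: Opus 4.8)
The plan is to combine the classical Ehrenfeucht--Mostowski method with the Barwise Compactness Theorem, using the admissibility of $A$ to control the ordinal bookkeeping. First I would Skolemize: expand the vocabulary by a linear order $<$ and, for each formula $\psi(\bar x,y)$ of $\lang{A}$, a Skolem function $f_\psi$ together with the (infinitary) axiom $\forall \bar x\,(\exists y\,\psi(\bar x,y)\to\psi(\bar x,f_\psi(\bar x)))$. Since $A$ is countable, only countably many symbols and axioms are added, and the resulting fragment still lives in a countable admissible set $A'$ with $o(A')=\gamma$. It therefore suffices to produce a single model of $\phi$ (together with the Skolem axioms) containing an infinite set of order-indiscernibles for $\lang{A'}$: the usual EM stretching then yields models of $\phi$ of every infinite cardinality.

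To obtain the indiscernibles I would introduce new constants $c_0,c_1,\dots$ and let $T$ be the $\lang{A'}$-theory containing $\phi$, the Skolem axioms, the sentences $c_0<c_1<\cdots$, and, for every $\lang{A'}$-formula $\psi(x_1,\dots,x_n)$ and all increasing index tuples $\bar\imath,\bar\jmath$, the indiscernibility axiom $\psi(c_{i_1},\dots,c_{i_n})\leftrightarrow\psi(c_{j_1},\dots,c_{j_n})$. This $T$ is $\Sigma_1$ over $A'$, so by the Barwise Compactness Theorem it has a model provided every subset of $T$ that is an element of $A'$ has a model. A set $T_1\in A'$ mentions only formulas whose $A'$-ranks are bounded by some $\alpha<\gamma$ --- this is exactly where admissibility enters, since the $A'$-definable image of $T_1$ under the rank function is an $A'$-set of ordinals and is therefore bounded below $o(A')=\gamma$. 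By hypothesis $\phi$ has a model $M$ of cardinality $\beths{\alpha}$; I would put an arbitrary linear order on $M$ and apply the Erd\H{o}s--Rado partition theorem to extract a large set homogeneous for the bounded family of formulas appearing in $T_1$. Interpreting the constants of $T_1$ inside this homogeneous set gives a model of $T_1$.

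Barwise Compactness then produces a model $N\models\phi$ in which $\{c_i\}$ is a set of order-indiscernibles for the full fragment $\lang{A'}$. Finally, for an arbitrary linear order $I$ I would form the Ehrenfeucht--Mostowski model $M_I$, namely the Skolem hull of an $I$-indexed family of indiscernibles realizing the common $\lang{A'}$-type of the $c_i$. Because every element of $M_I$ is a Skolem term in the indiscernibles, the truth value in $M_I$ of any $\lang{A'}$-sentence depends only on this EM type; in particular $M_I\models\phi$. As there are only countably many Skolem terms, $|M_I|=|I|$, so letting $I$ range over all infinite cardinalities yields models of $\phi$ in every infinite cardinality.

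I expect the main obstacle to be the matching carried out in the second paragraph: verifying that the Erd\H{o}s--Rado bound needed to homogenize all formulas of $A'$-rank below $\alpha$ (each of finite arity) stays at $\beths{\alpha}$, and that the partition ordinals and the Skolemized indiscernibility theory genuinely lie inside a countable admissible set with ordinal $\gamma$. It is precisely this dovetailing --- admissibility bounding the ranks occurring in any $A'$-finite fragment of $T$, matched against the partition bound $\beths{\alpha}$ supplied by the hypothesis --- that forces the Hanf number down to $\beths{\gamma}$ and explains why the weaker hypothesis (models in all cardinalities $\beths{\alpha}$ for $\alpha<\gamma$, rather than a single model of size $\beths{\gamma}$) already suffices.
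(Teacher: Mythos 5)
Your overall strategy (Skolemize, use Barwise Compactness to produce a model of $\phi$ with an infinite sequence of order-indiscernibles, then stretch the Ehrenfeucht--Mostowski hull) is the standard route; note that the paper itself does not prove Theorem~\ref{thm2} at all --- it quotes it from Keisler's book \cite{K} and only remarks that the proof uses the Erd\"{o}s--Rado theorem, Skolem functions, and indiscernibles. Measured against that standard proof, your second paragraph contains a genuine gap, located exactly at the step you yourself flagged as ``the main obstacle,'' but the problem is more fundamental than the bookkeeping issue you describe.

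To apply Barwise Compactness you must show that \emph{every} $T_1\subseteq T$ with $T_1\in A'$ has a model. Such a $T_1$ is in general infinite, and while admissibility does bound the set-theoretic ranks of its members below $\gamma$, bounded rank does \emph{not} bound the \emph{arities} of the formulas occurring in its indiscernibility axioms: any rank bound $\geq\omega$ already permits formulas of every finite arity (for instance, the indiscernibility axioms for all atomic formulas of the Skolemized language, whose Skolem functions have unbounded arity, form a single $\Delta_0$-definable element of $A'$). A model of such a $T_1$ must contain \emph{one} infinite sequence indiscernible simultaneously with respect to formulas of all finite arities, and no application --- nor any finite iteration --- of Erd\"{o}s--Rado inside a single model of cardinality $\beths{\alpha}$ can produce such a sequence. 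Indeed, the assertion that every structure of size $\kappa$ (with countably many relations of unbounded arity) contains an infinite sequence of quantifier-free indiscernibles is precisely the partition property $\kappa\rightarrow(\omega)^{<\omega}_{2}$, whose least witness is the Erd\"{o}s cardinal: it is strongly inaccessible, hence a $\beth$-fixed point exceeding every $\beths{\alpha}$ with $\alpha<\omega_1$, and ZFC cannot even prove such a cardinal exists. So the homogeneous set you want simply need not exist inside the model $M$ of cardinality $\beths{\alpha}$. This obstruction is the real content of the Morley--Barwise theorem and the reason its hypothesis demands models of size $\beths{\alpha}$ for \emph{all} $\alpha<\gamma$: in the actual proofs (Morley's argument for countable fragments, and Barwise's admissible version), Erd\"{o}s--Rado is iterated \emph{transfinitely across the whole family of models}, one $\beth$-level per stage of an ordinal-ranked induction (equivalently, a ranked consistency-property argument), rather than applied once inside a single model for each $A'$-finite piece. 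Your closing intuition --- formula ranks bounded by $\alpha$ dovetailing with $\beths{\alpha}$ --- is the right heuristic for that ranked iteration, but the one-shot homogenization you propose does not implement it. Your third paragraph (the stretching) is fine once a genuine model of the full theory $T$ is in hand; the gap is entirely in the verification of the Barwise Compactness hypothesis.
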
  

The proofs of Theorems \ref{thm1} and \ref{thm2} use the Erd\"{o}s-Rado Theorem to produce a model of $\phi$ with an infinite indiscernible sequence, in a language with added Skolem functions.  The indiscernible sequence can be stretched to give models in arbitrarily large cardinalities.

\bigskip

We shall use ``computable'' infinitary formulas.  The \emph{computable infinitary formulas} are formulas of $\lomegaone$ in which the infinite disjunctions and conjunctions are over c.e.\ sets.  To make this precise, we would assign indices to the formulas, based on notations in Kleene's $\mathcal{O}$, as is done in \cite{AshKnight}.  The least admissible set that contains $\omega$ is 
$A = L_{\omega_1^{CK}}$.  The subsets of $\omega$ in $A$ are exactly the hyperarithmetical sets, and all computable (or hyperarithmetical) structures are elements of $A$.  The computable infinitary formulas (in a fixed computable vocabulary $\tau$) are essentially the same as the $\lomegaone$ formulas in the admissible fragment $\mathcal{L}_A$; that is, for any formula 
$\varphi(\bar{x})$ in $\mathcal{L}_A$, there is a computable infinitary formula $\psi(\bar{x})$ that is logically equivalent to 
$\varphi(\bar{x})$.   
 
For many computable structures $\mathcal{A}$, there is a computable infinitary Scott sentence $\varphi$.  By Theorem \ref{thm2}, if $\varphi$ has models in all infinite cardinalities less than $\beth_{\omega_1^{CK}}$, then it has models of all infinite cardinalities.  However, some computable structures do not have a computable infinitary Scott sentence.  In particular, this is so for the ``Harrison ordering'', a computable 
ordering of type $\omega_1^{CK}(1+\eta)$.  The computable infinitary sentences true in the Harrison ordering are exactly those true in the ordering of type $\omega_1^{CK}$.  In fact, for any countable admissible set $A$, with ordinal $\alpha$, there are structures in $A$ with no Scott sentence in the admissible fragment $\mathcal{L}_A$.  One such structure is an ordering of type $\alpha(1+\eta)$.    

We do not use the notion of Scott rank in this paper, so we shall not give a definition.  We mention, for general interest, a result of Nadel \cite{Nadel1}, \cite{Nadel2}, saying that for a computable, or hyperarithmetical, structure $\mathcal{A}$, there is a computable infinitary Scott sentence just in case the Scott rank is less than $\omega_1^{CK}$.  More generally, if the structure $\mathcal{A}$ is an element of a countable admissible set $A$ with ordinal $\gamma$, then it has a Scott sentence in $\mathcal{L}_A$ just in case the Scott rank is less than $\gamma$.  The result below follows from a general theorem of Ressayre \cite{R1},\cite{R2}.   

\begin{thm}\
\label{Saturation}

\begin{enumerate}

\item  If $\mathcal{A}$ and $\mathcal{B}$ are computable (or hyperarithmetical) structures satisfying the same computable infinitary sentences, then $\mathcal{A}\cong\mathcal{B}$.  

\item  If $\mathcal{A}$ is a computable (or hyperarithmetical) structure, and $\bar{a}$ and $\bar{b}$ are tuples satisfying the same computable infinitary formulas in $\mathcal{A}$, then there is an automorphism of $\mathcal{A}$ taking $\bar{a}$ to $\bar{b}$.  

\end{enumerate} 

\end{thm}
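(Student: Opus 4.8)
The plan is to build the desired isomorphism (for part (1)) or automorphism (for part (2)) by a back-and-forth argument, using the admissibility of $A = L_{\omegaoneck}$ to guarantee that neither the ``forth'' nor the ``back'' step ever fails. I will treat part (1) in detail; part (2) is the special case obtained by taking $\mathcal{B} = \mathcal{A}$ and requiring all partial maps to extend the given map $\bar{a} \mapsto \bar{b}$. First I would introduce the family $P$ of finite partial maps $p : \bar{c} \mapsto \bar{d}$, with $\bar{c}$ from $\mathcal{A}$ and $\bar{d}$ from $\mathcal{B}$, such that $\bar{c}$ in $\mathcal{A}$ and $\bar{d}$ in $\mathcal{B}$ satisfy exactly the same computable infinitary formulas. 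Since $\mathcal{A}$ and $\mathcal{B}$ are countable (being hyperarithmetical, hence elements of $A$), it suffices to show that $P$ is a back-and-forth system: that the empty map lies in $P$ and that $P$ has the forth and back extension properties. The empty map is in $P$ precisely because $\mathcal{A}$ and $\mathcal{B}$ satisfy the same computable infinitary sentences, which is the hypothesis. By symmetry I need only verify the forth property, so I fix $p : \bar{c} \mapsto \bar{d}$ in $P$ and an element $e \in \mathcal{A}$, and must produce $e' \in \mathcal{B}$ with $\bar{c}e$ and $\bar{d}e'$ of the same computable infinitary type.

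The heart of the matter is this step, which I would argue by contradiction. If no such $e'$ exists, then for each $e' \in \mathcal{B}$ there is a computable infinitary formula true of $\bar{c}e$ in $\mathcal{A}$ but false of $\bar{d}e'$ in $\mathcal{B}$; since the computable infinitary formulas are stratified by ordinal level below $\omegaoneck$, each $e'$ is already distinguished at some level $\alpha_{e'} < \omegaoneck$. Here the key tool is that $A = L_{\omegaoneck}$ is admissible and, by Barwise, the satisfaction relation for formulas of $\lang{A}$ is $\Delta_1$ over $A$; using the identification of computable infinitary formulas with $\lang{A}$-formulas, the map $e' \mapsto \alpha_{e'}$ is then $\Sigma_1$-definable over $A$ on the set $\mathcal{B}$, which is an element of $A$. $\Sigma_1$-bounding (admissibility) supplies a single ordinal $\alpha^* < \omegaoneck$ bounding all the $\alpha_{e'}$, so that every $e' \in \mathcal{B}$ is distinguished from $e$ by a formula of level at most $\alpha^*$ lying in the type of $\bar{c}e$.

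I would then assemble these into one formula. The set of computable infinitary formulas $\theta(\bar{x}, y)$ of level at most $\alpha^*$ with $\mathcal{A} \models \theta(\bar{c}, e)$ is $\Sigma_1$ over $A$, hence its conjunction $\Theta(\bar{x}, y)$ is again a single computable infinitary formula (of level roughly $\alpha^* + 1$, still below $\omegaoneck$). By construction $\mathcal{A} \models \exists y\, \Theta(\bar{c}, y)$, witnessed by $e$, whereas $\mathcal{B} \models \neg \exists y\, \Theta(\bar{d}, y)$, since no $e'$ satisfies all of the chosen formulas. But $\exists y\, \Theta(\bar{x}, y)$ is a computable infinitary formula, and $\bar{c}$, $\bar{d}$ were assumed to satisfy the same such formulas --- a contradiction. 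This establishes the forth property and, by symmetry, the back property, so the back-and-forth construction yields the required isomorphism; the same construction with $\mathcal{B} = \mathcal{A}$ and base map $\bar{a} \mapsto \bar{b}$ gives the automorphism for part (2).

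I expect the main obstacle to lie in the bookkeeping about levels in the last two paragraphs: one must check both that the distinguishing formulas can be taken of a single bounded syntactic complexity and that the resulting conjunction is genuinely a computable infinitary formula, i.e.\ that its index set is c.e.\ (equivalently $\Sigma_1$ over $A$). The $\Sigma_1$-bounding step is exactly what forces the cutoff at $\omegaoneck$ and is the reason the statement is special to admissible sets rather than to arbitrary countable fragments; getting the definability of $e' \mapsto \alpha_{e'}$ precisely right, so that bounding genuinely applies to a $\Sigma_1$ function on an element of $A$, is the delicate point on which the whole argument turns.
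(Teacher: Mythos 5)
The paper offers no proof of this theorem at all: it simply notes that the result follows from a general theorem of Ressayre \cite{R1}, \cite{R2}. Your proposal is therefore a genuinely different route --- a self-contained, direct argument --- and its skeleton is the correct, standard one: the finite partial maps preserving computable infinitary types form a back-and-forth system; a failure of the forth property is converted, via $\Sigma_1$-bounding over the admissible set $A = L_{\omega_1^{CK}}$, into a single infinitary formula $\exists y\,\Theta(\bar{x},y)$ true of $\bar{c}$ in $\mathcal{A}$ but false of $\bar{d}$ in $\mathcal{B}$, contradicting the assumption that they have the same type. This is essentially how Ressayre-type saturation theorems are themselves proved. What your route buys is transparency, and it isolates exactly why the cutoff sits at $\omega_1^{CK}$; what the paper's citation buys is generality, since Ressayre's theorem covers arbitrary countable admissible sets, which the paper exploits in its remarks about structures in an admissible set $A$ with ordinal $\gamma$.

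One step --- the one you yourself flag as delicate --- is stated incorrectly and needs repair. You justify that $\Theta$, the conjunction of the level-$\le\alpha^*$ formulas true of $\bar{c}e$, is computable infinitary on the grounds that its index set is ``c.e.\ (equivalently $\Sigma_1$ over $A$).'' These two notions are not equivalent: the $\Sigma_1$-over-$L_{\omega_1^{CK}}$ subsets of $\omega$ are exactly the $\Pi^1_1$ sets, and the type set in question is in general \emph{not} c.e., since deciding satisfaction of formulas of rank up to $\alpha^*$ in a computable structure requires roughly $\alpha^*$ Turing jumps. The repair is as follows: because $\mathcal{A}$ and $\mathcal{B}$ are hyperarithmetical, they are elements of $A$, so satisfaction of $\mathcal{L}_A$-formulas in them is $\Delta_1$ over $A$; the set of all formulas of rank $\le\alpha^*$ is an element of $A$; hence by $\Delta_1$-separation the set $\Phi$ of those true of $\bar{c}e$ is an element of $A$, so $\bigwedge\Phi$ is a genuine $\mathcal{L}_A$-formula, and the paper's own observation that every $\mathcal{L}_A$-formula is logically equivalent to a computable infinitary formula finishes the step. (Alternatively, apply $\Sigma_1$-collection directly to obtain a set $\Phi \in A$ of distinguishing formulas, bypassing ranks altogether.) With this correction your argument is complete.
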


\subsection{Fra\"{i}ss\'{e} limits}

The computable structures that we produce in Section 3 will be ``Fra\"{i}ss\'{e} limits.''   In the discussion below, we will give slightly non-standard definitions.  We will state a simple result on existence of computable Fra\"{i}ss\'{e} limits that is not the most general, but is exactly suited to our needs.              

\begin{defn}

Let $\tau$ be a countable relational vocabulary.  Let $\bk$ be a set of $\tau$-structures, all finite.       

\begin{enumerate}

\item $\bk$ satisfies the \emph{hereditary property}, or $HP$, if for all $A\in \bk$, all proper substructures of $A$ are in $\bk$.     

\item  $\bk$ satisfies the \emph{joint embedding property}, or $JEP$, if for all $A,B\in \bk$, there exists $C\in \bk$ with embeddings $f:A\rightarrow C$ and $g:B\rightarrow C$.    

\item  $\bk$ satisfies the \emph{amalgamation property}, or $AP$, if for all $A,B,C\in \bk$ with embeddings $f:C\rightarrow A$ and $g:C\rightarrow B$, there is some $D\in\bk$, with embeddings $f':A\rightarrow D$ and $g':B\rightarrow D$, such that $f'\circ f = g'\circ g$.     

\item  $\bk$ is an \emph{age} if it satisfies $HP$, $JEP$, and $AP$.  

\end{enumerate}

\end{defn}

\noindent
\textbf{Remarks}.\

\begin{enumerate}

\item  For Fra\"{i}ss\'{e}, the vocabulary of an age may have function symbols, and the structures making up the age are finitely generated, but not necessarily finite.  For us, the vocabulary of an age will always be relational, and the structures in the age are finite.  

\item  Fra\"{i}ss\'{e}'s definition of \emph{age} omits the condition $AP$.  He proved results with and without this condition.  With 
$AP$, the limit structures are unique and homogeneous, as in the theorem below.  We added $AP$ to the definition above because we do not want to consider ages without $AP$, and we do not want to have to say everywhere ``age satisfying $AP$''. 

\end{enumerate} 

\begin{thm} [Fra\"{i}ss\'{e}]
\label{FTheorem}

Let $\bk$ be a countable age.  Then there is a countable structure $\mathcal{A}$, unique up to isomorphism, such that the isomorphism types of finite substructures of $\mathcal{A}$ are exactly the isomorphism types of structures in $\bk$.  Moreover, $\mathcal{A}$ is ``homogeneous'' in the sense that any isomorphism between finite substructures of $\mathcal{A}$ extends to an automorphism of $\mathcal{A}$.  

\end{thm}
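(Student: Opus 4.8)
The plan is to construct $\mathcal{A}$ as the union of an increasing chain of finite structures drawn from $\bk$, and to obtain both homogeneity and uniqueness from a single richness property installed during the construction. Say that a countable structure $\mathcal{A}$, all of whose finite substructures lie in $\bk$, is \emph{weakly homogeneous} if whenever $B$ is a substructure of $C$ with $B,C\in\bk$ and $f\colon B\to\mathcal{A}$ is an embedding, then $f$ extends to an embedding $g\colon C\to\mathcal{A}$. Since the vocabulary is relational and the structures in $\bk$ are finite, it suffices to require this for one-point extensions, i.e.\ when $|C|=|B|+1$; the general case follows by iterating.

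For existence I would build a chain $A_0\subseteq A_1\subseteq\cdots$ of members of $\bk$ and set $\mathcal{A}=\bigcup_n A_n$. Fix enumerations of the isomorphism types in $\bk$ and of the relevant extension requirements, and use a bookkeeping function to interleave two kinds of steps. At a step of the first kind I am handed a target $C\in\bk$; applying JEP to $A_n$ and $C$ yields $D\in\bk$ into which both embed, and after relabeling so that $A_n$ is literally a substructure of $D$ I set $A_{n+1}=D$. This guarantees that every member of $\bk$ embeds into $\mathcal{A}$. At a step of the second kind I am handed an embedding $f\colon B\to A_n$ with $B$ a substructure of some $C\in\bk$; applying AP to the span $A_n\hookleftarrow B\hookrightarrow C$ yields $D\in\bk$ with compatible embeddings of $A_n$ and $C$, and after relabeling so that $A_n\subseteq D$ I set $A_{n+1}=D$, so that $C$ embeds into $A_{n+1}$ over $f$. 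This builds in weak homogeneity. Each $A_n$ lies in $\bk$, so by HP every finite substructure of $\mathcal{A}$ lies in $\bk$; conversely the steps of the first kind make every structure of $\bk$ embed into $\mathcal{A}$. Hence the finite substructures of $\mathcal{A}$ are exactly the isomorphism types in $\bk$.

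Homogeneity then follows from weak homogeneity by a back-and-forth argument: given an isomorphism $p_0$ between finite substructures and an enumeration $a_0,a_1,\ldots$ of $\mathcal{A}$, I would alternately extend the domain to include the next $a_i$ and extend the range to include the next $a_i$, each time invoking weak homogeneity to realize the required one-point extension inside $\mathcal{A}$. The union of the resulting chain of partial isomorphisms is an automorphism extending $p_0$. The same back-and-forth, run between two countable weakly homogeneous structures with age $\bk$, produces an isomorphism; this gives uniqueness, and in particular shows that the structure just constructed is homogeneous and unique up to isomorphism.

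The main obstacle is the bookkeeping in the existence step: I must organize the two infinite families of requirements — that every member of $\bk$ embed, and that every partial embedding $f\colon B\to A_n$ with $B$ a substructure of $C$ admit an extension realizing $C$ — so that every requirement is eventually met while each $A_n$ stays finite and in $\bk$. The relevant index sets are countable (because $\bk$ is a countable age and each $A_n$ is finite, so only finitely many embeddings $B\to A_n$ arise at each stage), so a standard dovetailing suffices; the delicate point is the verification that weak homogeneity genuinely holds at the limit, namely that an embedding $f\colon B\to\mathcal{A}$, whose image is finite and hence lands in some $A_n$, was indeed assigned an extension step.
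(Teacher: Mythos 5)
Your proposal is correct and follows essentially the same route as the paper, which only sketches the argument (deferring to Hodges): build $\mathcal{A}$ as a union of a chain of finite structures from $\bk$, using $JEP$ and $AP$ to meet the embedding and extension (weak homogeneity) requirements, then derive homogeneity and uniqueness by back-and-forth. Your write-up simply fills in the standard details (one-point extensions, bookkeeping, and the back-and-forth verification) that the paper leaves to the reference.
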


For an account of the proof of Theorem \ref{FTheorem}, see the model theory textbook by Hodges \cite{H}.  It is not at all difficult.  We construct $\mathcal{A}$ as the union of a chain of finite structures $\mathcal{A}_s$, all isomorphic to elements of $\bk$.  We extend, step by step, with the goal of producing a structure that includes copies of all elements of $\bk$ as substructures and is homogeneous.  The Joint Embedding Property and the Amalgamation Property guarantee that there is always an appropriate next structure.  

\begin{defn}

For a countable age $\bk$, the structure $\mathcal{A}$ as in Theorem \ref{FTheorem} is called the \emph{Fra\"{i}ss\'{e} limit of $\bk$}.  

\end{defn}

We want Fra\"{i}ss\'{e} limits that are \emph{computable}.  The proof of Theorem \ref{FTheorem} is effective, given a nice computable list of the structures in the age, and an effective way to determine when one structure in this list embeds in another.  We give some definitions to make these things precise.  The first definition says what we mean by a nice computable list of structures in $\bk$.  In addition to saying how to compute the atomic diagram of each structure, the list gives the full universe, in terms of the standard list of finite sets $(D_n)_{n\in\omega}$.  

\begin{defn} [Computable representation]     

Let $\tau$ be a computable relational vocabulary, and let $\bk$ be an age consisting of $\tau$-structures.  A \emph{computable representation of $\bk$} is a computable sequence $\bbk$ such that

\begin{enumerate}

\item  for each $i$, $\bbk(i)$ is a pair $(e,n)$ such that $\varphi_e$ is the characteristic function of the atomic diagram of a structure in 
$\bk$, and $D_n$ is the universe of this structure,

\item  for each $\mathcal{C}\in \bk$, there is some $i$ with first component $e$ such that $\varphi_e$ is the atomic diagram of a copy of 
$\mathcal{C}$.  

\end{enumerate}

\end{defn}  

\noindent
\textbf{Note}.  Informally, we may identify a computable representation $\bbk$ of $\bk$ with the uniformly computable sequence of structures $(\mathcal{C}_i)_{i\in\omega}$ such that the first component of $\bbk(i)$ is a computable index for $\mathcal{C}_i$, but we bear in mind that the second component of $\bbk(i)$ is an index for the full universe of $\mathcal{C}_i$.  Knowing that the first component of 
$\bbk(i)$ is $e$, we can effectively determine whether a given $c$ is in the universe of $\mathcal{C}_i$, but given $e$, we 
cannot say that the universe has no more elements beyond those in a certain finite set.

\bigskip

The next definition says when one structure (on the list given by a computable representation $\bbk$) can be embedded into another.  

\begin{defn}

Let $\tau$ be a computable relational language, and let $\bk$ be an age consisting of $\tau$-structures.  Suppose that $(C_i)_{i\in\omega}$ is the sequence of structures given by a computable representation $\bbk$.  

\begin{enumerate}

\item  The corresponding \emph{embedding relation}, denoted by $E(\bbk)$, is the set of triples $(i,j,f)$ such that $f$ is an embedding of $C_i$ into $C_j$.

\item  We say that $\bbk$ has the \emph{strong embedding property} if $E(\bbk)$ is computable.

\end{enumerate}   

\end{defn}   

\noindent
\textbf{Remark}.  If $\tau$ is a finite relational vocabulary, then for any computable representation $\bbk$ of $\bk$, $E(\bbk)$ is computable.  If $\tau$ is infinite, this is not always true.      

\begin{prop}

There is a computable representation $\bbk$ of an age $\bk$ (for a computable vocabulary $\tau$) such that $E(\bbk)$ is not even c.e.

\end{prop}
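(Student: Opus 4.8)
The plan is to exploit the remark preceding the statement: the difficulty can only arise when $\tau$ is infinite, because then checking that a finite map $f\colon C_i\to C_j$ respects \emph{every} relation symbol is an a~priori $\Pi^0_1$ condition rather than a decidable one. I would take $\tau=\{P_e:e\in\omega\}$ to consist of infinitely many unary predicates, and let $\bk$ be the class of all finite $\tau$-structures in which the set of predicates holding at any element belongs to a fixed countable family $\mathcal{S}$ of subsets of $\omega$, to be chosen below. For unary $\tau$ the class $\bk$ is automatically an age: a substructure simply retains the colours of its elements (so $HP$ holds), and both $JEP$ and $AP$ are handled by disjoint unions and pushouts over the common part, since distinct elements do not interact and nothing can obstruct amalgamation. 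The key observation is that for one-point structures the embedding question degenerates: if $C_i$ and $C_j$ each have a single element, coloured by $S_i,S_j\in\mathcal{S}$ respectively, then the unique candidate map is an embedding exactly when $S_i=S_j$, and this equality is the $\Pi^0_1$ condition $\forall k\,(k\in S_i\leftrightarrow k\in S_j)$.

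Next I would choose $\mathcal{S}$ so that this equality encodes a non-c.e.\ set. Let $K=\{e:\varphi_e(e){\downarrow}\}$ be the halting set, and for each $e$ define $T_e=\{s\in\omega:\text{the computation }\varphi_e(e)\text{ has not halted within }s\text{ steps}\}$. Then $T_e$ is computable uniformly in $e$, and $T_e=\omega$ precisely when $\varphi_e(e){\uparrow}$ (that is, $e\notin K$), while $T_e$ is finite when $e\in K$. Put $\mathcal{S}=\{\omega\}\cup\{T_e:e\in\omega\}$, a countable family of uniformly computable sets. I would then list all finite $\tau$-structures whose element-colours lie in $\mathcal{S}$, each built on an initial segment of $\omega$; since membership in $\omega$ and in $T_e$ is decidable uniformly, each such structure has a computable atomic diagram, and by the $s$-$m$-$n$ theorem indices are obtained uniformly. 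This yields a computable representation $\bbk=(C_i)_{i\in\omega}$ of $\bk$ satisfying both clauses of the definition (every isomorphism type over $\mathcal{S}$ is listed, possibly with repetitions, which is harmless).

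Finally I would fix, computably in $e$, an index $i_e$ with $C_{i_e}$ the one-point structure coloured $T_e$, an index $i_\infty$ with $C_{i_\infty}$ the one-point structure coloured $\omega$, and the map $f_e$ sending the point of $C_{i_e}$ to the point of $C_{i_\infty}$. By the degeneration above, $(i_e,i_\infty,f_e)\in E(\bbk)$ iff $T_e=\omega$ iff $e\notin K$. Thus $e\mapsto(i_e,i_\infty,f_e)$ is a computable many-one reduction of $\overline{K}$ to $E(\bbk)$; as $\overline{K}$ is not c.e., neither is $E(\bbk)$. The routine verifications ($HP$, $JEP$, $AP$, and the uniform computability of the atomic diagrams) are the bulk of the writing but present no difficulty. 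The one genuine idea, and hence the step I expect to require the most care, is the construction of the colour family $\mathcal{S}$: the sets $T_e$ must be \emph{uniformly} computable, so that $\bbk$ really is a computable representation and $E(\bbk)$ is at worst $\Pi^0_1$, while the single question ``$T_e=\omega$?'' ranges over a non-c.e.\ set. Indexing a $\Pi^0_1$-complete set in place of $\overline{K}$ would even make $E(\bbk)$ properly $\Pi^0_1$-complete, though for the proposition non-c.e.\ already suffices.
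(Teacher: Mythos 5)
Your proof is correct, but it takes a genuinely different route from the paper's. The paper fixes the simplest possible age---finite structures over unary predicates $U_n$ in which each element satisfies \emph{at most one} $U_n$---and injects all the non-computability into the \emph{representation}: it runs a stage-by-stage diagonalization against requirements $R_e$ (``$W_e \neq E(\bbk)$''), exploiting the fact that a computable index for an atomic diagram never reveals, at any finite stage, that an element's colour is fully settled; when the designated triple $(i,i+1,f)$ shows up in $W_e$, the construction retroactively gives element $0$ of $C_i$ a fresh colour, killing the embedding, and if the triple never appears, $f$ remains an embedding, so either way $W_e \neq E(\bbk)$. You instead keep the representation canonical and static, and put the non-computability into the \emph{age} itself: the colour sets $T_e$ are uniformly computable but equal $\omega$ exactly when $e \notin K$, so the embedding question for one-point structures becomes a many-one reduction of $\overline{K}$ to $E(\bbk)$. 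Your approach is more modular---no stage construction or bookkeeping, just a reduction---and it yields the sharper conclusion that $E(\bbk)$ is $\Pi^0_1$-complete, since $E(\bbk)$ is $\Pi^0_1$ whenever the diagrams are uniformly computable (the embedding condition is universal over the infinitely many predicates). The paper's diagonalization buys something complementary: it exhibits the pathology for a completely tame age, where each element carries only finitely much colour information, showing that it is the choice of representation, not the age, that can be bad; your age, by contrast, is engineered around the halting problem. Both arguments establish the proposition; your routine verifications (HP, JEP, AP for a purely unary vocabulary, and uniformity of indices via the s-m-n theorem) are indeed routine and present no hidden difficulty.
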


\begin{proof} [Proof sketch]

Let $\tau$ consist of unary predicates $U_n$ for $n\in\omega$.  Let $\bk$ be the set of finite 
$\tau$-structures in which each element satisfies $U_n$ for at most one $n$.  The isomorphism type 
of a structure in $\bk$ is determined by the set of $n$ such that the structure has an element in 
$U_n$ and the number of elements not in any $U_n$.  We construct a computable representation $\bbk$ 
of $\bk$ such that $E(\bbk)$ is not c.e.  We describe the construction of a uniformly computable 
sequence $(C_i)_{i\in\omega}$ of $\tau$-structures, with universe specified.  The effective 
construction proceeds in stages.  At stage $s$, we determine, for each of finitely many $i$, the 
full universe of $C_i$ and a finite part of the atomic diagram.  The isomorphism types of the 
$C_i$'s must be exactly those of the structures in $\bk$, and we must satisfy the following 
requirements. 

\bigskip
\noindent
$R_e$:  $W_e$ is not equal to $E(\bbk)$.        
                
\bigskip

The strategy for $R_e$ is as follows.  At stage $s$, when we first begin work on the requirement, we designate a pair of indices $i$, $i+1$, on which we have not yet specified the universe or said anything about the atomic diagrams.  We give $C_i$ universe $2$ and $C_{i+1}$ universe $3$.  Let $f$ be the identity function on $2$.  We vow to put $1$ into $U_i$ in both structures, and to put $2$ into $U_{i+1}$ in $C_{i+1}$.  We keep $0$ out of $U_n$ in $C_{i+1}$.  We vow to keep $0$ out of all $U_n$ in $C_i$ unless the triple $(i,i+1,f)$ appears in $W_e$.  If this happens at stage $s$, then for the first $n$ such that we have not already put into the diagram of $C_i$ the statement $\neg{U_n(0)}$, we add the statement $U_n(0)$. 

\bigskip

We continue enumerating the diagrams of of structures $C_i$, making sure that the isomorphism types match those in $\bk$, and satisfying the requirements.      
So, by definition, $\bbk$ is a computable representation of $\bk$ and $E(\bbk)\neq W_e$, for all $e$, which proves the result.
\end{proof}           

Here are the last definitions we shall need in discussing computable Fra\"{i}ss\'{e} limits.  

\begin{defn}

Let $\bk$ be an age, with computable representation $\bbk$.  Let $(C_i)_{i\in\omega}$ be the corresponding sequence of structures.  
Let $\mathcal{A}$ be a Fra\"{i}ss\'{e} limit of $\bk$.    

\begin{enumerate}

\item  $E(\bbk,\mathcal{A})$ is the set of pairs $(i,f)$ such that $f$ is an embedding of $C_i$ into~$\mathcal{A}$.

\item  $\mathcal{A}$ is \emph{effectively homogeneous} if the set of finite partial isomorphisms between substructures of $\mathcal{A}$ is computable.  

\end{enumerate} 

\end{defn}

Here is the result that we will use in Section 3.  

\begin{thm} 
\label{ComputableFraisseLimit}

Let $\tau$ be a computable \emph{relational} language, possibly infinite.  Let $\bk$ be an age consisting of $\tau$-structures.  Suppose that $\bbk$ is a computable representation of $\bk$ with the strong embedding property.  Then there is a computable Fra\"{i}ss\'{e} limit 
$\mathcal{A}$ such that $E(\bbk,\mathcal{A})$ is computable.  In fact, we have a uniform effective procedure for passing from $\tau$, 
$\bbk$ and $E(\bbk)$ to $D(\mathcal{A})$ and $E(\bbk,\mathcal{A})$.        

\end{thm}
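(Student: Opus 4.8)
The plan is to carry out the classical Fra\"{i}ss\'{e} construction of Theorem \ref{FTheorem} as an effective stage-by-stage construction, maintaining throughout a tight bookkeeping link to the list $(C_i)_{i\in\omega}$ given by $\bbk$. I would build $\mathcal{A}$ as the union of a chain of finite structures $\mathcal{A}_0 \subseteq \mathcal{A}_1 \subseteq \cdots$, with universes $U_0 \subseteq U_1 \subseteq \cdots$ exhausting $\omega$, while preserving the invariant that each $\mathcal{A}_s$ is an \emph{induced} substructure of $\mathcal{A}_{s+1}$ and that I have in hand an index $i_s$ together with an explicit isomorphism $h_s \colon \mathcal{A}_s \to C_{i_s}$. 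Although $\tau$ is infinite, this invariant makes each $\mathcal{A}_s$ a genuine finite $\tau$-structure whose full atomic diagram is computable, obtained by transporting $D(C_{i_s})$ through $h_s$. Granting the invariant, computability of the two target objects is immediate: to decide an atomic fact $R(\bar a)\in D(\mathcal{A})$ I wait for a stage $s$ with $\bar a \subseteq U_s$ and read off its truth value in $\mathcal{A}_s$ (it is stage-independent because the $\mathcal{A}_s$ are induced substructures); and to decide whether $(i,f)\in E(\bbk,\mathcal{A})$ I wait for a stage $s$ with $\mathrm{ran}(f)\subseteq U_s$ and test whether $(i, i_s, h_s\circ f)\in E(\bbk)$, which is decidable by the strong embedding property.

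The extensions are driven by two families of requirements, handled in a fixed effective order. The embedding requirements $P_i$ ask that $C_i$ embed into $\mathcal{A}$; this is a JEP step. The homogeneity requirements $Q$ range over triples $(m,g_1,g_2)$, where $g_1,g_2$ are embeddings of $C_m$ into the current $\mathcal{A}_s$, together with a target point $a$; such a triple names a partial isomorphism $p = g_2\circ g_1^{-1}$ of $\mathcal{A}_s$, and every partial isomorphism of $\mathcal{A}_s$ arises this way. For $Q$ I perform a one-point extension (an AP step) producing $\mathcal{A}_{s+1}\supseteq\mathcal{A}_s$ and an element $b$ so that $p\cup\{(a,b)\}$ is again a partial isomorphism. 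Because each finite partial isomorphism of the eventual $\mathcal{A}$ already lives in some $\mathcal{A}_s$, a standard back-and-forth bookkeeping over all such triples and target points guarantees that every finite partial isomorphism of $\mathcal{A}$ extends to an automorphism; together with $HP$ (which forces every finite substructure of $\mathcal{A}$ into $\bk$) and the $P_i$'s (which put a copy of every element of $\bk$ into $\mathcal{A}$), this makes the age of $\mathcal{A}$ equal to $\bk$ and $\mathcal{A}$ homogeneous, so by the uniqueness in Theorem \ref{FTheorem} it is the Fra\"{i}ss\'{e} limit.

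The heart of the argument is to realize the JEP and AP steps effectively, using only $\bbk$ and $E(\bbk)$. I would first isolate an auxiliary subroutine: given a finite substructure $S$ of $\mathcal{A}_s$ (presented by the finite set $h_s(S)\subseteq C_{i_s}$), find a list index $m$ and an embedding $\theta\colon C_m\to C_{i_s}$ with $\mathrm{ran}(\theta)=h_s(S)$. Searching over pairs $(m,\theta)$ and testing membership in $E(\bbk)$, this search halts: by the representation property some $C_m$ is isomorphic to $S$, and the witnessing isomorphism, composed with the inclusion, is the required $\theta$. With list indices available for the relevant substructures, a JEP step for $P_i$ becomes a halting search for an index $j$ and embeddings $f\colon C_{i_s}\to C_j$ and $g\colon C_i\to C_j$ (halting because JEP supplies such a $C_j$ inside $\bk$, hence on the list, and $E(\bbk)$ recognizes the embeddings); I then set $\mathcal{A}_{s+1}\cong C_j$, extending $h_s$ by $f$ and assigning fresh elements of $\omega$ to the new points of $C_j$. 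An AP step is analogous: letting $X^+$ be the substructure generated by the domain $X$ of $p$ together with the target point, I apply $AP$ to the span determined by the inclusion $X\hookrightarrow X^+$ and the map $X\xrightarrow{p}Y\hookrightarrow\mathcal{A}_s$, and search the list for the amalgam $C_j$ and the two commuting embeddings; this too halts by $AP$ plus the representation property, and realizing $C_j$ as $\mathcal{A}_{s+1}$ extends $\mathcal{A}_s$ and supplies the point $b$.

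I expect the main obstacle to be precisely this effective execution of $JEP$ and $AP$ on the representation: the classical properties only assert that suitable amalgams \emph{exist} in $\bk$, so the whole construction hinges on the fact that such a witness must appear on the list as some $C_j$ (by condition (2) of a computable representation) and is recognizable once found (by computability of $E(\bbk)$), which together convert each ``there exists an amalgam'' into a \emph{terminating} search. A secondary point requiring care, owing to the infinite vocabulary, is that a finite structure carries infinitely much atomic information, so one must never attempt to verify an isomorphism or a partial isomorphism by inspecting all relations directly; instead every such test is routed through $E(\bbk)$ and the tracked isomorphisms $h_s$, as in the subroutine above. Finally, since every step consults only $\tau$, $\bbk$, and $E(\bbk)$ through effective halting searches, the passage from $(\tau,\bbk,E(\bbk))$ to $D(\mathcal{A})$ and $E(\bbk,\mathcal{A})$ is uniform, as required; the computability of the set of finite partial isomorphisms of $\mathcal{A}$ (effective homogeneity) then follows as well, since membership in it is semidecidable by a halting embedding search while non-membership is semidecidable via $D(\mathcal{A})$.
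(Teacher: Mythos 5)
Your proposal is correct and follows essentially the same route as the paper's proof sketch: both build $\mathcal{A}$ as the union of a uniformly computable chain of finite structures $\mathcal{A}_s$, each equipped with a tracked isomorphism to some $C_{i_s}$ on the list, and both decide $E(\bbk,\mathcal{A})$ (and $D(\mathcal{A})$) by waiting for a stage covering the relevant finite set and then consulting $E(\bbk)$ through the tracked isomorphism. Your write-up simply fills in details the paper leaves implicit, namely that the $JEP$/$AP$ steps become terminating searches over the list recognized by $E(\bbk)$, and that all isomorphism tests in the infinite vocabulary must be routed through $E(\bbk)$ rather than checked directly.
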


\begin{proof} [Proof Sketch]

The assumptions that $\bbk$ is a computable representation of $\bk$ and that $E(\bbk)$ is computable let us carry out the construction from \cite{H} effectively.  Say that $(C_i)_{i\in\omega}$ is the sequence of structures given by $\bbk$.  We construct the computable 
Fra\"{i}ss\'{e} limit $\mathcal{A}$ as the union of a uniformly computable sequence of finite structures $\mathcal{A}_s$, specifying at each step an isomorphism $f_s$ from some $C_i$ onto $\mathcal{A}_s$.  We determine a computable sequence of pairs 
$(i_s,f_s)_{s\in\omega}$ such that $f_s$ is an isomorphism from $C_{i_s}$ onto $\mathcal{A}_s$.  We know what to put into the diagram of $\mathcal{A}_s$ by looking $f_s$ and the diagram of $C_{i_s}$.    

To see that $E(\bbk,\mathcal{A})$ is computable, consider $f$ mapping the universe of $\mathcal{C}_i$ into $\mathcal{A}$.  For some $s$, we have $ran(f)\subseteq\mathcal{A}_s$, and we have specified a function $f_s$ mapping some $C_j$ isomorphically onto 
$\mathcal{A}_s$.  Let $g = f_s^{-1}\circ f$.  Then $(i,f)\in E(\bbk,\mathcal{A})$ iff $(i,j,g)\in E(\bbk)$.  

We have described a uniform procedure that takes the inputs $\tau$, $\bbk$ and $E(\bbk)$, and effectively produces $D(\mathcal{A})$ and 
$E(\bbk,\mathcal{A})$.           
\end{proof}  

We defined effective homogeneity.  The next result connects it with the relation $E(\bbk,\mathcal{A})$.   

\begin{prop}  

Suppose $\bk$ is an age with a computable representation $\bbk$ and a $\mathcal{A}$ is a computable Fra\"{i}ss\"{e} limit such that $E(\bbk,\mathcal{A})$ is computable.  Then $\mathcal{A}$ is effectively homogeneous. 

\end{prop}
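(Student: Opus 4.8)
The plan is to decide, uniformly in a code for a finite injective map $p$ between finite subsets of $\mathcal{A}$, whether $p$ is a partial isomorphism, i.e.\ whether $p$ is an isomorphism from the induced substructure of $\mathcal{A}$ on $\mathrm{dom}(p)$ onto the induced substructure on $\mathrm{ran}(p)$. Since $\tau$ is relational, these substructures are the induced ones, and $p$ is a partial isomorphism precisely when it preserves and reflects every atomic relation. The main obstacle is exactly that $\tau$ may be infinite: one cannot simply consult $D(\mathcal{A})$ and check the atomic formulas one relation at a time, since there are infinitely many relation symbols and a single finite tuple can meet infinitely many of them. The device that overcomes this is the hypothesis that $E(\bbk,\mathcal{A})$ is computable: membership of $(i,g)$ in $E(\bbk,\mathcal{A})$ asserts, in a single computable predicate, that $g$ agrees with $\mathcal{A}$ on all of the (infinitely many) atomic relations, because an embedding of relational structures preserves and reflects every atomic formula.

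Concretely, given $p$ with $\mathrm{dom}(p)=X=\{a_1,\dots,a_n\}$ and $p(a_k)=b_k$, I would first reject $p$ if it is not injective. Otherwise I would search for a pair $(i,f)\in E(\bbk,\mathcal{A})$ with $f$ a bijection from the universe of $C_i$ onto $X$. Such a pair must exist: the induced substructure of $\mathcal{A}$ on $X$ is a finite $\tau$-structure, so by Theorem \ref{FTheorem} its isomorphism type lies in $\bk$, whence by clause (2) of the definition of a computable representation some $C_i$ is isomorphic to it; any such isomorphism is an embedding of $C_i$ into $\mathcal{A}$ with range $X$. The search is effective because for each $i$ the universe of $C_i$ is the finite set specified by the second component of $\bbk(i)$, so I need only run over those $i$ with $|C_i|=n$ and, for each, the finitely many bijections $f$ from the universe of $C_i$ onto $X$, testing $(i,f)\in E(\bbk,\mathcal{A})$.

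Once such $(i,f)$ is found, $f$ is an isomorphism from $C_i$ onto the induced substructure on $X$. I would then form the composite $h=p\circ f$, a finite map from the universe of $C_i$ into $\mathcal{A}$ with range $\{b_1,\dots,b_n\}$, and output ``$p$ is a partial isomorphism'' exactly when $(i,h)\in E(\bbk,\mathcal{A})$. Correctness is a short verification: if $p$ is a partial isomorphism then $h=p\circ f$ is a composite of isomorphisms, hence an embedding of $C_i$ into $\mathcal{A}$, so $(i,h)\in E(\bbk,\mathcal{A})$; conversely, if $(i,h)\in E(\bbk,\mathcal{A})$ then $h$, having range exactly $\{b_1,\dots,b_n\}$, is an isomorphism from $C_i$ onto the induced substructure on $\{b_1,\dots,b_n\}$, and since $p=h\circ f^{-1}$, $p$ is an isomorphism between the two induced substructures. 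As $E(\bbk,\mathcal{A})$ is computable and both the search and the final test halt, the set of finite partial isomorphisms of $\mathcal{A}$ is computable, which is the definition of effective homogeneity.

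Finally, I would record the feature that makes the name ``homogeneous'' appropriate and that is used downstream: because $\mathcal{A}$ is a Fra\"{i}ss\'{e} limit, every finite partial isomorphism extends to an automorphism, and the decision procedure above makes the back-and-forth effective. Given a partial isomorphism $p$ and a further point $a\in\mathcal{A}$, one can effectively find $b$ with $p\cup\{(a,b)\}$ again a partial isomorphism, by searching over candidate $b$ and applying the decision procedure; homogeneity guarantees a witness, so the search halts. The only points needing care are the degenerate cases (the empty map, which is vacuously a partial isomorphism, and non-injective $p$, which is rejected at once), and the observation that nothing here uses computability of $E(\bbk)$ itself---only the computable representation $\bbk$ and the computability of $E(\bbk,\mathcal{A})$, both of which are available by hypothesis and are exactly what Theorem \ref{ComputableFraisseLimit} delivers.
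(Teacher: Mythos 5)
Your proof is correct and is essentially the paper's own argument: given a finite partial $1$--$1$ map $p$, find $(i,f)\in E(\bbk,\mathcal{A})$ with $f$ mapping $C_i$ onto $\mathrm{dom}(p)$, and decide whether $p$ is a partial isomorphism by testing $(i,\,p\circ f)\in E(\bbk,\mathcal{A})$. You simply make explicit what the paper leaves implicit --- why the search for $(i,f)$ is effective and must terminate (the universes are given by the representation, and the isomorphism type of the induced substructure lies in $\bk$), both directions of the correctness equivalence, and the degenerate cases.
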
 

\begin{proof} 

We suppose that $\mathcal{A}$ has universe $\omega$.  Let $f$ be a finite partial $1-1$ function.  Find $i$ and $g$ such that $(i,g)\in E(\bbk,\mathcal{A})$, and let $h = f\circ g$.  Now, $f$ is an isomorphism between finite substructures of $\mathcal{A}$ iff $(i,h)\in E(\bbk,\mathcal{A})$.
\end{proof}   
 
In \cite{CHMM}, Csima et al give necessary and sufficient conditions for an age to give rise to a computable limit structure.  They allow function symbols in the vocabulary, and the structures in the age are finitely generated, but not necessarily finite.  Even assuming that the vocabulary is relational, the result in \cite{CHMM} does not match Theorem \ref{ComputableFraisseLimit}.  The hypotheses of Csima et al are weaker, and the conclusion is also weaker.  In particular, the embedding relation is not computable.  The result in \cite{CHMM} was inspired by an old result of Goncharov \cite{G} and Peretyat'kin \cite{P}, giving necessary and sufficient conditions for a countable homogeneous structure to have a decidable copy.  The proof in \cite{CHMM}, like those in \cite{G} and \cite{P}, involves a priority construction, with guesses at the extension relation, and injury resulting from guesses that are not correct.  This precludes effective homogeneity.  Theorem \ref{ComputableFraisseLimit} is much more elementary.       

In Section 3, we will construct, by induction, a family of computable limit structures $\mathcal{A}_\alpha$ corresponding to computable ordinals $\alpha$ (really, we will work with notations for ordinals).  For each $\alpha$, we obtain $\mathcal{A}_\alpha$ by applying Theorem \ref{ComputableFraisseLimit} to a triple of inputs 
$\tau_\alpha$, $\bbk_\alpha$, and $E(\bbk_\alpha)$.  it is straightforward to show that, given the inputs for $\mathcal{A}_\beta$ for $\beta < \alpha$, we can pass effectively to the inputs for $\mathcal{A}_\alpha$.  We first attempted this construction using the result in \cite{CHMM}, where the inputs for $\mathcal{A}_\alpha$ included only a weak substitute for $E(\bbk_\alpha)$.  Passing effectively from the inputs for $\mathcal{A}_\beta$ for $\beta < \alpha$ to the inputs for 
$\mathcal{A}_\alpha$ seemed too cumbersome.  We were pleased to find that we could apply the more elementary Theorem \ref{ComputableFraisseLimit}.     

\section{The Hanf number is at most $\beth_{\omega_1^{CK}}$}     

Our goal in this section is to prove that the Hanf number for Scott sentences of computable structures is at most $\beth_{\omega_1^{CK}}$.  The lemma below says that for a computable structure $\mathcal{A}$, we can replace the Scott sentence, which may not be computable infinitary, by a low level computable infinitary sentence in a larger vocabulary.  Let $\tau$ be a computable vocabulary, and let $\mathcal{A}$ be a computable 
$\tau$-structure.  From the original proof of the Scott Isomorphism Theorem \cite{S}, there is a family of $\lomegaone(\tau)$-formulas 
$\varphi_{\bar{a}}(\bar{x})$, corresponding to tuples $\bar{a}$ in $\mathcal{A}$, such that $\varphi_{\bar{a}}(\bar{x})$ defines the orbit of $\bar{a}$ under automorphisms of $\mathcal{A}$.  By Theorem \ref{Saturation} (b), we may take $\varphi_{\bar{a}}(\bar{x})$ to be the conjunction of the computable infinitary formulas true of~$\bar{a}$.             

\begin{lem}
\label{lem1}

Let $\tau$ be a computable vocabulary, and let $\mathcal{A}$ be a computable $\tau$-structure with Scott sentence $\phi$.  There is a computable vocabulary $\tau^*\supseteq \tau$ with a c.e.\ set $T$ of  computable infinitary $\tau^*$-sentences (all computable $\Pi_2$) such that for any $\tau$-structure $\mathcal{B}$, $\mathcal{B}\models \phi$ iff $\mathcal{B}$ has an expansion $\mathcal{B}^*$ satisfying $T$.  

\end{lem}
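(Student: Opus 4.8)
The plan is to introduce a fresh relation symbol for each orbit formula, trading the possibly unbounded complexity of the $\varphi_{\bar a}$ for the single $\forall\exists$ pattern of a back-and-forth system. For each finite tuple $\bar a$ from $\mathcal A$ I would put into $\tau^*$ a new relation symbol $R_{\bar a}$ of arity $|\bar a|$; since $\mathcal A$ has universe a subset of $\omega$, the tuples can be listed effectively, so $\tau^*\supseteq\tau$ is a computable vocabulary. Throughout I take $\phi$ to be the canonical Scott sentence built from the orbit formulas $\varphi_{\bar a}$, so that $\mathcal B\models\phi$ is equivalent to the existence of a back-and-forth system between $\mathcal A$ and $\mathcal B$, i.e.\ to $\mathcal A\equiv_{\infty\omega}\mathcal B$. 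The intended interpretation in an expansion $\mathcal B^*$ of a $\tau$-structure $\mathcal B$ is $R_{\bar a}^{\mathcal B^*}=\{\bar b: \mathcal B\models\varphi_{\bar a}(\bar b)\}$, so that $R_{\bar a}$ names the orbit of $\bar a$.

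The set $T$ would consist of the following families of sentences, with one instance for each relevant $\bar a$ (and each $b\in\mathcal A$), which together assert that the $R_{\bar a}$ encode a back-and-forth system:
\begin{enumerate}
\item[(i)] a sentence asserting that the empty-tuple orbit holds, seeding the system;
\item[(ii)] for each quantifier-free $\tau$-formula $\theta(\bar x)$ true of $\bar a$ in $\mathcal A$, the sentence $\forall\bar x\,(R_{\bar a}(\bar x)\to\theta(\bar x))$, forcing $a_i\mapsto b_i$ to be a partial isomorphism whenever $R_{\bar a}(\bar b)$ holds (equalities and inequalities among the $x_i$ guarantee well-definedness and injectivity);
\item[(iii)] the \emph{forth} axiom $\forall\bar x\,(R_{\bar a}(\bar x)\to\exists y\,R_{\bar a b}(\bar x,y))$;
\item[(iv)] the \emph{coverage} axiom $\forall\bar x\,\forall y\,(R_{\bar a}(\bar x)\to\bigvee_{b\in\mathcal A}R_{\bar a b}(\bar x,y))$.
\end{enumerate}
Axioms (ii) and (iii) are $\forall$ and $\forall\exists$ over quantifier-free matrices, hence computable $\Pi_1$ and $\Pi_2$, while in (iv) the infinite disjunction $\bigvee_b R_{\bar a b}$ ranges over the c.e.\ list of elements of $\mathcal A$, so its matrix is computable $\Sigma_1$ and the whole sentence is computable $\Pi_2$. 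Because $\mathcal A$ is computable, its quantifier-free types are computable, so all these sentences can be written down uniformly and $T$ is c.e.

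For the forward direction, given $\mathcal B\models\phi$ I would take $\mathcal B^*$ to be the semantic expansion above. Since $\phi$ is the canonical Scott sentence, the forth and coverage conditions occur as conjuncts of $\phi$, so (iii) and (iv) hold; (ii) holds because $\varphi_{\bar a}\to\theta$ is a validity when $\theta$ is one of the conjuncts defining $\varphi_{\bar a}$; and (i) holds because $\phi$ asserts the empty-tuple orbit. For the converse, given any expansion $\mathcal B^*$ of $\mathcal B$ satisfying $T$, I would set $I=\{(\bar a,\bar b):\mathcal B^*\models R_{\bar a}(\bar b)\}$ and verify directly that $I$ is a nonempty back-and-forth system: (i) gives nonemptiness, (ii) gives that each pair is a partial isomorphism, (iii) gives the forth property, and (iv), instantiated at any $b'\in\mathcal B$, gives the back property. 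A nonempty back-and-forth system witnesses $\mathcal A\equiv_{\infty\omega}\mathcal B$, and since $\phi$ is true in $\mathcal A$ and $\equiv_{\infty\omega}$ preserves $\lomegaone$-sentences, we conclude $\mathcal B\models\phi$.

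The main point to get right is the interaction of the complexity bookkeeping with the biconditional. The orbit formulas $\varphi_{\bar a}$ may sit arbitrarily high in the computable infinitary hierarchy, and indeed $\phi$ itself need not be computable infinitary at all (as the Harrison ordering shows); the whole gain comes from replacing each $\varphi_{\bar a}$ by the atom $R_{\bar a}$, collapsing everything to the fixed $\Pi_2$ pattern of a back-and-forth system. The delicate axiom is the coverage axiom (iv): it must use the c.e.\ disjunction over all $b\in\mathcal A$ to secure the back property, and one has to confirm that this keeps the sentence computable $\Pi_2$ rather than pushing it higher. Everything else is routine once one records that $\mathcal B\models\phi$ is equivalent to the existence of a back-and-forth system between $\mathcal A$ and $\mathcal B$, which is exactly what $T$ axiomatizes over the new predicates.
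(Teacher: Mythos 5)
Your proposal is correct, and the axiomatization is essentially the paper's own: new relation symbols indexed by tuples of $\mathcal{A}$, quantifier-free-type axioms, and $\Pi_2$ forth/coverage axioms. Indeed your (i)--(iv) reproduce the paper's sentences (1)--(3), with the paper's sentence (2) being just your axioms instantiated at the empty tuple, and your forward direction is the same semantic expansion (the paper interprets $P_{\bar a}$ by the conjunction of the computable infinitary formulas true of $\bar a$, which by Ressayre's theorem defines the same orbit as your $\varphi_{\bar a}$). Where you genuinely diverge is the converse. The paper first replaces an arbitrary model $\mathcal{B}^*$ of $T$ by a countable substructure elementary for the countable fragment generated by $\phi$ and $T$, and then uses the predicates to run a back-and-forth construction of an actual isomorphism $\mathcal{A}\cong\mathcal{B}$; you instead read off from the $R_{\bar a}$ a nonempty back-and-forth system between $\mathcal{A}$ and the possibly uncountable $\mathcal{B}$ and invoke Karp's theorem to get $\mathcal{A}\equiv_{\infty\omega}\mathcal{B}$, hence $\mathcal{B}\models\phi$. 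Your route is shorter and needs no cardinality reduction; the paper's is more self-contained, using only L\"{o}wenheim--Skolem for countable fragments rather than the characterization of $\equiv_{\infty\omega}$ by partial-isomorphism systems. One point you should make explicit: the lemma is stated for an \emph{arbitrary} Scott sentence $\phi$, whereas your forward direction uses that $\phi$ is the canonical one, whose conjuncts are literally the forth and coverage conditions. This is harmless, but only because any Scott sentence of $\mathcal{A}$ is logically equivalent to the canonical one (any Scott sentence is complete: by downward L\"{o}wenheim--Skolem its consequences are exactly the $\lomegaone$-sentences true in $\mathcal{A}$); without that remark you have proved the statement only for one particular choice of $\phi$. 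The paper's forward direction quietly relies on the same fact when it asserts that every element of an arbitrary, possibly uncountable, model of $\phi$ satisfies some $P_b$.
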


\begin{proof}

The vocabulary $\tau^*$ has predicates $P_{\bar{a}}$ for all tuples $\bar{a}\in\mathcal{A}$.  We put into $T$ sentences saying the following.

\begin{enumerate}

\item  $(\forall \bar{x})[P_{\bar{a}}(\bar{x})\rightarrow\varphi(\bar{x})]$, where $\varphi(\bar{x})$ is a finitary quantifier-free formula true of  
$\bar{a}$ in $\mathcal{A}$ (this is computable $\Pi_1$),

\item  $(\forall y) \bigvee_b P_b(y)\ \&\ \bigwedge_b(\exists y) P_b (y)$, where the disjunction and conjunction are over $b$ in $\mathcal{A}$ (this is computable $\Pi_2$),

\item  $(\forall\bar{x})[P_{\bar{a}}(\bar{x})\rightarrow ((\forall y) \bigvee_b P_{\bar{a},b}(\bar{x},y)\ \&\ \bigwedge_b (\exists y)P_{\bar{a},b}(\bar{x},y))]$, where $\bar{a}$ is a tuple in $\mathcal{A}$.  As for (2), the disjunction and conjunction are over $b$ in $\mathcal{A}$  (this is computable $\Pi_2$).

\end{enumerate}
Since $\mathcal{A}$ is computable, it is clear that $T$ is a c.e.\ set of computable $\tau^*$-sentences, all computable $\Pi_2$ or simpler.  We show that a $\tau$-structure $\mathcal{B}$ is a model of the Scott sentence $\phi$ iff it can be expanded to a model of $T$.  

\bigskip    
\noindent
($\Rightarrow$):  Suppose $\mathcal{B}$ is a model of the Scott sentence $\phi$.  We show that $\mathcal{B}$ can be expanded to a model 
$\mathcal{B}^*$ of $T$.  For $\bar{c}$ in $\mathcal{B}$, we put $\bar{c}$ into $P_{\bar{a}}^{\mathcal{B}^*}$ iff $\bar{c}$ satisfies in $\mathcal{B}$ the computable infinitary $\tau$-formulas that were true of $\bar{a}$ in $\mathcal{A}$.  There may be many tuples $\bar{a}'$ in $\mathcal{A}$ satisfying the same computable infinitary $\tau$-formulas as $\bar{a}$, and $\bar{c}$ will be in all of the corresponding relations $P_{\bar{a}'}^{\mathcal{B}^*}$.  We check that $\mathcal{B}^*$ is a model of $T$.  The sentences of type (1) are clearly true.  All of the relations 
$P_b$ are satisfied in $\mathcal{B}^*$, and each element of $\mathcal{B}^*$ satisfies at least one $P_b$.  Therefore, the sentences of type (2) are true.  Supposing that $\bar{c}$ satisfies $P_{\bar{a}}(\bar{u})$ in $\mathcal{B}^*$, there are elements $d$ satisfying $P_{\bar{a},b}(\bar{c},x)$, and every element $d$ satisfies one of these $P_{\bar{a},b}(\bar{c},x)$.  Therefore, the sentences of type (3) are true.      

\bigskip
\noindent
($\Leftarrow$):  Now, suppose that $\mathcal{B}$ has an expansion $\mathcal{B}^*$ satisfying $T$.  We must show that $\mathcal{B}$ satisfies $\phi$.  It is convenient to suppose that $\mathcal{B}^*$ is countable.  (In case it is not, we take the countable fragment $F$ generated by 
$\phi$ and the sentences of $T$.  We replace $\mathcal{B}^*$ by a countable $F$-elementary substructure $\mathcal{C}^*$, and we replace 
$\mathcal{B}$ by the appropriate reduct $\mathcal{C}$.)  Supposing that $\mathcal{B}^*$ is countable, we show that $\mathcal{B}$ satisfies 
$\phi$ by showing that $\mathcal{A}\cong\mathcal{B}$.  Let $\mathcal{F}$ be the set of finite partial functions mapping a non-empty tuple 
$\bar{a}$ in $\mathcal{A}$ to a tuple $\bar{b}$ in $\mathcal{B}$ such that $\mathcal{B}^*\models P_{\bar{a}}(\bar{b})$.  We show that 
$\mathcal{F}$ has the back-and-forth property.  Suppose $f\in\mathcal{F}$ maps $\bar{a}$ to $\bar{b}$.  For any $c$ in $\mathcal{A}$, there is some $d$ in $\mathcal{B}$ such that $\mathcal{B}^*\models P_{\bar{a},c}(\bar{b},d)$, so $f\cup\{(c,d)\}\in\mathcal{F}$.  For any $d$ in 
$\mathcal{B}$, there is some $c$ such that $\mathcal{B}^*\models P_{\bar{a},c}(\bar{b},d)$.  Then $f\cup\{(c,d)\}\in\mathcal{F}$. 

We note that the given $\mathcal{A}$ has a computable expansion to a model of $T$ in which, for each $\bar{a}$, the only tuple in the interpretation of $P_{\bar{a}}$ is $\bar{a}$ itself.  There is another expansion of $\mathcal{A}$ to a model of $T$, in which a tuple $\bar{c}$ is in the interpretation of $P_{\bar{a}}$ just in case $\bar{c}$ satisfies all of the computable infinitary $\tau$-formulas true of $\bar{a}$.  We do not claim that this second expansion is computable, but of course this does not matter. 
\end{proof}  

If $A$ is a countable admissible set containing the signature $\tau$ and the $\tau$-structure $\mathcal{A}$, then the set $T$, formed exactly as above, is c.e.\ relative to $\mathcal{A}$, and it consists of very simple sentences in an expanded signature $\tau^*$, where both $\tau^*$ and 
$T$ are in $A$.  Again, $\mathcal{B}$ is a model of the Scott sentence for $\mathcal{A}$ iff it can be expanded to a model of $T$.  

\bigskip      

Using Lemma \ref{lem1}, we can prove Theorem \ref{Main}.

\begin{proof} [Proof of Theorem \ref{Main}]

From the original Scott sentence $\phi$, in a computable vocabulary $\tau$, we pass to the c.e.\ set of sentences $T$ in the expanded vocabulary $\tau^*$, where $\tau^*$ is still computable.  Let $\phi^*$ be the conjunction of $T$.  This is a computable infinitary $\tau^*$-sentence.  For each $\alpha < \omega_1^{CK}$, the sentence $\phi$ has a model $\mathcal{B}$ of cardinality $\beth_\alpha$.  By Lemma \ref{lem1}, $\mathcal{B}$ can be expanded to a model $\mathcal{B}^*$ of $T$, and $\phi^*$. Applying Theorem \ref{thm2} to the computable infinitary $\tau^*$-sentence $\phi^*$, we get the fact that there are arbitrarily large models.  By Lemma~\ref{lem1}, the $\tau$-reducts of these all satisfy $\phi$.            
\end{proof} 

In the same way, we see that the Hanf number for Scott sentences of hyperarithmetical structures is at most $\beth_{\omega_1^{CK}}$.  In fact, for a countable admissible set $A$ with ordinal $\gamma$, the Hanf number for Scott sentences of structures in $A$ is at most $\beth_\gamma$.       

\section{The Hanf number is at least $\beth_{\omega_1^{CK}}$}\label{examples}

Recall that an infinite cardinal $\kappa$ is \emph{characterized} by an $\lomegaone$ sentence $\phi$ if $\phi$ has a model of cardinality $\kappa$ but does not have a model of cardinality $\kappa^+$.  For each $\alpha < \omegaoneck$, we construct a computable structure whose Scott sentence characterizes $\bethalpha$, thus proving that the Hanf number for Scott sentences of computable structures is \emph{exactly} equal to $\bethomegaoneck$.  In fact, we prove the following.  

\begin{thm}
\label{ComputableBeths} 

There exists a partial computable function $I$ such that for each $a\in \O$, $I(a)$ is a tuple of computable indices for several objects, among which are a relational vocabulary $\tau_a$, and the atomic diagram of a $\tau_a$-structure $\mathcal{A}_a$, with the following features: 

\begin{enumerate}  

\item  the Scott sentence $\phi_a$ of the structure $\mathcal{A}_a$ characterizes the cardinal $\beths{|a|}$, where $|a|$ is the ordinal with notation $a$,  

\item  the vocabulary $\tau_a$ contains a unary predicate $U_a$ and a binary relation $<_a$ such that

\begin{enumerate}
 
\item $(U_a,<)$ is a dense linear order without endpoints,

\item  there is a model $\mathcal{B}$ of $\phi_a$ of cardinality $\beths{|a|}$ such that 
$(U_a^{\mathcal{B}},<_a^{\mathcal{B}})$ has a co-final sequence of order type $\beths{|a|}$. 

\end{enumerate}

\end{enumerate}

\end{thm}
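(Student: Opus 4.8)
The goal is to construct, uniformly and effectively in a notation $a \in \O$, a computable structure $\mathcal{A}_a$ whose Scott sentence characterizes $\beths{|a|}$. My plan is to build the $\mathcal{A}_a$ by transfinite recursion on the ordinal notation, using the computable Fra\"{i}ss\'{e} limit machinery (Theorem \ref{ComputableFraisseLimit}) at each successor step, and to make the construction effective in $a$ by exploiting the recursion that $\O$ supports (the effective passage from $a$ to its predecessors). The base case $|a| = 0$ should be a structure characterizing $\beths{0} = \aleph_0$, for instance a dense linear order without endpoints together with the trivial $U_a$; the feature (2) is automatic here. The real content is in the successor and limit cases, where I must arrange that a structure characterizing $\beths{\alpha}$ gives rise to one characterizing $\beths{\alpha+1}$ (resp.\ $\beths{\lambda}$ at a limit), while preserving the computable-index bookkeeping demanded by the conjunction of clauses in $I(a)$.

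The engine for going up one $\beth$ is the standard device: if we have a structure $\mathcal{B}$ realizing a dense linear order $(U,<)$ of size $\beths{\alpha}$ together with some apparatus coding an injection of a would-be model into the power set of $U$ (via a set-membership-style relation), then the Scott sentence can force the ambient universe to have size at most $2^{\beths{\alpha}} = \beths{\alpha+1}$, while a direct construction gives a model of exactly that size. The design I would use carries, alongside $(U_a, <_a)$, additional sorts and relations whose homogeneous Fra\"{i}ss\'{e} limit enforces, in every model of $\phi_a$, a definable ``power-set''–type bound on cardinality relative to the linear-order sort inherited from the predecessor notation. Feature (2)(b), the existence of a model in which $(U_a,<_a)$ has a cofinal sequence of order type $\beths{|a|}$, is exactly what lets the next stage treat $U_a$ as a scale of the right length; so clause (2) is not decorative but is the induction hypothesis packaged for reuse. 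I would set up the age $\bk_a$ of finite approximations so that $HP$, $JEP$, and $AP$ hold, produce a computable representation $\bbk_a$ with the strong embedding property (automatic when $\tau_a$ is finite, and arranged by hand when clauses force $\tau_a$ infinite), and invoke Theorem \ref{ComputableFraisseLimit} to obtain a computable $\mathcal{A}_a$ with $E(\bbk_a,\mathcal{A}_a)$ computable, hence effectively homogeneous.

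The characterization claim (1) splits into two halves. That $\phi_a$ \emph{has} a model of size $\beths{|a|}$ is the constructive half: I exhibit one directly, taking the model from (2)(b), whose linear-order sort has a cofinal $\beths{|a|}$-sequence and whose remaining sorts, bounded in the power set of that scale, are filled out to full size $\beths{|a|}$. That $\phi_a$ has \emph{no} model of size $\beths{|a|}^{+} = \beths{|a|+1}$ is where the homogeneity of the Fra\"{i}ss\'{e} limit does the work: by Theorem \ref{Saturation}, models of the Scott sentence are determined up to the back-and-forth structure of finite substructures, and the relations in $\tau_a$ are chosen precisely so that a back-and-forth argument bounds each element of any model by a subset of the order sort, giving $|{\text{model}}| \le 2^{\beths{\alpha}}$ when the order sort has size at most $\beths{\alpha}$. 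At a limit notation $a$, $|a| = \sup_n |a_n|$ along the effective fundamental sequence, and I would take a suitable amalgam of the $\mathcal{A}_{a_n}$ so that its order sort has size $\sup_n \beths{|a_n|} = \beths{|a|}$, with the cardinality bound inherited stagewise.

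The main obstacle is neither the cardinal arithmetic nor any single Fra\"{i}ss\'{e} limit, but the \emph{uniformity}: producing one partial computable $I$ that simultaneously outputs all the indices (for $\tau_a$, $\bbk_a$, $E(\bbk_a)$, and thence $D(\mathcal{A}_a)$ and $E(\bbk_a,\mathcal{A}_a)$) by genuine recursion on $\O$. This forces the inductive data for $\mathcal{A}_a$ to be assembled effectively from the data for the predecessors, including the limit case where only a notation for a fundamental sequence is available. The reason we want the strong form of Theorem \ref{ComputableFraisseLimit}, with the computable embedding relation passed explicitly as input rather than guessed, is exactly to make this effective passage routine: as the authors note, the weaker Csima–et al.\ framework made the recursion too cumbersome, whereas having $E(\bbk_a)$ in hand lets us pass effectively from the inputs for the $\mathcal{A}_\beta$ with $\beta < \alpha$ to the inputs for $\mathcal{A}_a$. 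So the plan is to spend most of the effort specifying the ages $\bk_a$ and verifying $AP$ and the effective inductive step, and to treat the cardinality computations and the back-and-forth bound as the comparatively mechanical consequences of homogeneity.
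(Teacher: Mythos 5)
Your overall architecture is the same as the paper's: transfinite recursion on notations in $\O$, computable Fra\"{i}ss\'{e} limits obtained from Theorem \ref{ComputableFraisseLimit} with $E(\bbk_a)$ carried along explicitly as data, a power-set jump at successor notations, concatenation along the fundamental sequence at limit notations, and clause (2) read as the induction hypothesis packaged for reuse (your remarks on uniformity match the paper's almost verbatim). But there are two genuine gaps. First, your base case fails as stated: the Scott sentence of a plain dense linear order without endpoints has models in \emph{every} infinite cardinality, since all DLOs without endpoints are back-and-forth ($L_{\infty\omega}$) equivalent to $(\mathbb{Q},<)$, and back-and-forth equivalence preserves every $\lomegaone$ sentence; so no Scott sentence of such a structure characterizes $\aleph_0$. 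The paper's base structure is $(\mathbb{Q},<)$ expanded by unary predicates $Q_q$ naming each individual rational; the Scott sentence then entails $(\forall x)\bigvee_q Q_q(x)$ together with ``each $Q_q$ holds of at most one element,'' and that is what kills uncountable models.

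Second, and more seriously, the successor step is where all the mathematical work lies, and your proposal leaves it as a black box. The ``apparatus coding an injection of a would-be model into the power set of $U$ via a set-membership-style relation'' does not work if implemented in the obvious way: the age of finite bipartite structures (vertices in $V$ related by a membership-like relation to elements of $U$) has as its Fra\"{i}ss\'{e} limit a random bipartite graph, and every sufficiently rich bipartite graph of any cardinality is back-and-forth equivalent to it, so its Scott sentence enforces no cardinality bound --- extensionality of the coding is exactly what homogeneity destroys. The paper instead imports Souldatos's construction \cite{So2}: a sort $V$ of vertices, a copy $M$ of $\mathcal{A}_a$ carrying $(U_a,<_a)$, and a coloring $F$ assigning to each pair of vertices a color in $U_a$ subject to the min-condition ($\bigstar$): for distinct $v_0,v_1,v_2$, if $F(v_0,v_1)\neq F(v_0,v_2)$ then $F(v_1,v_2)=\min\{F(v_0,v_1),F(v_0,v_2)\}$, and otherwise $F(v_1,v_2)>_a F(v_0,v_1)=F(v_0,v_2)$. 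Verifying that this class (further modified by $U_b$ and $P$ to keep the induction going) satisfies $AP$ --- Lemma \ref{lem:dap}, a three-stage amalgamation --- and that the limit's Scott sentence admits a model of size $\beths{|a|+1}$ but none of size $\beths{|a|+1}^+$ (the appeals to Theorems 4.13 and 4.14 of \cite{So2}) is the core of the theorem; nothing in your outline substitutes for it, and finding an age that simultaneously has $AP$ and forces the power-set bound in all models is precisely the nontrivial part. A smaller point: the ``no model of size $\beths{|a|}^+$'' half cannot be obtained from Theorem \ref{Saturation}, which concerns computable or hyperarithmetical structures and computable infinitary sentences; the correct tool is that every model of the Scott sentence, of any cardinality, is back-and-forth equivalent to $\mathcal{A}_a$, and the ($\bigstar$)-combinatorics transfer along that equivalence.
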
  

We define $I$ by computable transfinite recursion on ordinal notation.  For each $a$, $I(a)$ is a tuple of computable indices for the following:

\begin{enumerate}

\item  the vocabulary $\tau_a$, 

\item  a representation $\bbk_a$ of an age $\bk_a$, 

\item  $E(\bbk_a)$,   

\item  the atomic diagram of $\mathcal{A}_a$, the Fraisse limit of $\bk_a$,  

\item  $E(\bbk,\mathcal{A}_a)$.

\end{enumerate}

The structure $\mathcal{A}_a$ along with the relation 
$E(\bbk,\mathcal{A}_a)$ are obtained by applying the uniform effective procedure of Theorem 
\ref{ComputableFraisseLimit} to $\tau_a$, $\bbk_a$ and $E(\bbk_a)$.  We must arrange that the Scott 
sentence 
$\phi_a$ for $\mathcal{A}_a$ characterizes the cardinal~$\beth_{|a|}$.    

\bigskip
\noindent
\textbf{Base case}.  Recall that $1$ is the unique notation for $0$.  We describe $I(1)$.  The vocabulary $\tau_1$ consists of unary relation symbols $U_1$ and $Q_q$ for $q\in\mathbb{Q}$, plus the binary relation symbol $<_1$.  We want $\mathcal{A}_1$ to be an expansion of 
$(\mathbb{Q},<)$ in which the interpretation of $U_1$ consists of all rationals, and the interpretation of $Q_q$ consists just of $q$.  The Scott sentence of $\mathcal{A}_1$ has no uncountable model.  The age $\bk_1$ consists of finite substructures of $\mathcal{A}_1$, including the empty structure.  It is not difficult to see that there is a computable representation $\bbk_1$ of $\bk_1$ for which the embedding relation 
$E_1 = E(\bbk_1)$ is computable.  We apply the uniform effective procedure from Theorem 
\ref{ComputableFraisseLimit} to get a computable limit structure 
$\mathcal{A}_1$ such that $E(\bbk_1,\mathcal{A}_1)$ is also computable.        

\bigskip    
\noindent
\textbf{Inductive step}.  We define $I(a)$, assuming that we have previously defined $I(b)$ for all $b <_\mathcal{O} a$, and $a\not= 1$.  Recall that for $a\in\mathcal{O}$, $|a|$ is the ordinal with notation $a$.  We split the construction into two cases, depending on whether $|a|$ is a successor ordinal or a limit ordinal.   

\subsection{Successor Ordinals}
\label{sec:Successor}

In this subsection, we suppose that $I$ has been defined on all $b \leq_\mathcal{O} a$ so that the conditions of Theorem \ref{ComputableBeths} are satisfied.  We suppose that $I(a)$ is a code for a quintuple of indices for $\tau_a$, $\bbk_a$, $E(\bbk_a)$, $D(\mathcal{A}_a)$, and 
$E(\bbk_a,\mathcal{A}_a)$.  The structure $\mathcal{A}_a$ is the Fra\"{i}ss\'{e} limit, which is 
obtained from $\bbk_a$ and $E(\bbk_a)$ as in Theorem \ref{ComputableFraisseLimit}, and the Scott 
sentence $\phi_a$ of $\mathcal{A}$ characterizes the cardinal $\beth_{|a|}$.  

By the induction hypothesis, we have a unary predicate $U_a$ and a binary relation $<_a$ such that 

\begin{enumerate}

\item [(a)] $(U_a,<)$ is a dense linear order without endpoints (in any model of $\phi_a$),  

\item [(b)] there is a model $\mathcal{B}$ of $\phi_a$ of size $\beths{|a|}$ such that $(U_a^{\mathcal{B}},<_a^{\mathcal{B}})$ contains a co-final sequence of order type $\beths{|a|}$. 

\end{enumerate}

Then we inductively extend the definition of $I$ to $b = 2^a$, where $|b| = |a|+1$.  The construction is a modified version of that in \cite{So2}.  We let $\tau_b$ be the vocabulary $\tau_a\cup \{V,M,U_b,P,F,<_b\}$, where $V$, $M$, and $U_b$ are unary predicates, $<_b$ is a binary predicate and $F$ is a ternary predicate.  We suppose that the symbols $V$, $M$, $U_b$, $P$, $F$, and $<_b$ are new, not in $\tau_a$.  We first describe $\bk_b$ and show that it is an age.  Then we consider the computable indices that make up $I(b)$.   

\bigskip

We let $\bk_b$ be the collection of all finite $\tau_b$-structures that satisfy the conjunction of the following: 

\begin{enumerate}

\item The domain is the disjoint union of $V$, $M$, $U_b$. Think of $V$ as a set of vertices and $M$ as a set of edge-colors and 
$U_b$ as a set of vertex-colors. 

\item $M\restriction\tau_a$ is a structure in $\bk_a$. In particular, there is a linear order $<_a$ defined on a subset $U_a$ of $M$. 

\item All relations in $\tau_a$ are void outside of $M$. 

\item  The predicate $P$ defines a vertex-coloring on $V$ with values in $U_b$.  That is, for each 
$v\in V$, there is \emph{at most} one $u\in U_b$ such that $P(v,u)$. 

\item The predicate $F$ defines an edge-coloring on $[V]^2\setminus\{(v,v)|v\in V\}$.  This time, the colors are elements of $U_a$; i.e., for each pair $v_0,v_1\in V$, there is \emph{at most} one $u\in U_a$ such that $F(v_0,v_1,u)$ and $F(v_1,v_0,u)$.  We will just write 
$F(v_0,v_1) = u = F(v_1,v_0)$. 

\item  $<_b$ is a linear order on $U_b$. 

The next property is the one that drives the construction. 

\item For any triple of distinct elements $v_0,v_1,v_2\in V$, if $F(v_0,v_1)\neq F(v_0,v_2)$, then 
\begin{align*}\label{star}\tag{$\bigstar$}
F(v_1,v_2) & =\min \{F(v_0,v_1),F(v_0,v_2)\},
\end{align*}
where $\min$ is according to the $<_a$-ordering.\\
Otherwise, $F(v_1,v_2)>_a F(v_0,v_1)=F(v_0,v_2)$. 
\end{enumerate}

\bigskip
\noindent
\textbf{Remark}.  The collection $\bk_b$ described above differs from the collection $K(\M)$ in \cite{So2} in the following respects: 

\begin{enumerate}

\item  The set $U_b$ and the projection $P$ are missing in $K(\M)$.  The reason it is introduced here is that we need it to carry out the induction. 

\item  Here the set $M$ is finite and its restriction to $\tau_a$ is a (finite) structure in $\bk_a$.  In \cite{So2}, the set $\M$ is infinite, and its restriction on some vocabulary $\tau$ satisfies an $\lomegaone(\tau)$-sentence $\phi$.  

\item  The requirement that $P$ and $F$ are total functions defined on their corresponding domains has been relaxed to solely requiring that they take at most one value. The reason is that we need $\bk$ to satisfy $HP$. This is not the case in \cite{So2}. Nevertheless, in the Fra\"{i}ss\'e limit, both $P$ and $F$ will be total functions, not just partial functions.

\item  The empty structure belongs to $\bk_b$, since it also belongs to $\bk_a$.

\end{enumerate}

\begin{lem}\label{lem:dap} 

$\bkb$ satisfies $HP$, $JEP$ and $AP$. 

\end{lem}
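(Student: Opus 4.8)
The plan is to verify the three properties $HP$, $JEP$, and $AP$ for $\bkb$ in turn, treating $HP$ as routine, $JEP$ as a special case of $AP$ (amalgamating over the empty structure), and concentrating effort on $AP$, where the coloring condition ($\bigstar$) must be preserved. First I would dispatch $HP$: a substructure of a structure in $\bkb$ inherits the partition into $V$, $M$, $U_b$, the order $<_b$ on $U_b$, and the partial colorings $P$ and $F$; all the conditions (1)--(6) are visibly downward-closed, and condition (7) is a universal statement about triples, so restricting to a substructure only removes triples and cannot create a violation. (Here the deliberate relaxation noted in the Remark---that $P$ and $F$ need only be \emph{partial}---is exactly what makes $HP$ go through, since a proper substructure may omit the color-values that witnessed totality.)

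Next I would set up $AP$. Given $C \hookrightarrow A$ and $C \hookrightarrow B$ in $\bkb$, I would identify $A$ and $B$ with structures sharing the common substructure $C$ (so $A \cap B = C$), and build the amalgam $D$ on the set-theoretic union $A \cup B$. On the $M$-part, the restriction to $\tau_a$ lies in $\bk_a$, so I invoke the induction hypothesis that $\bk_a$ has $AP$ to amalgamate $M^A$ and $M^B$ over $M^C$ inside $\bk_a$; this also amalgamates the linear order $<_a$ on $U_a \subseteq M$. The orders $<_b$ on $U_b$ and the vertex-colorings $P$ amalgamate easily, since a linear order on a union of two linearly ordered sets agreeing on their intersection can always be completed (by interleaving), and $P$ is just a partial function into $U_b$. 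The real content is defining $F$ on the new ``mixed'' pairs $\{v_0, v_1\}$ with $v_0 \in V^A \setminus V^C$ and $v_1 \in V^B \setminus V^C$, where no color is yet assigned.

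The hard part will be choosing the colors of these mixed pairs so that condition ($\bigstar$) holds for \emph{every} triple in $D$, including triples that straddle $A$ and $B$. The key observation I would exploit is that ($\bigstar$) describes an ultrametric-type structure: for any three vertices, the two largest of the three $F$-values coincide and the third is $\le$ them, so the coloring behaves like the ``closest common ancestor'' distance in a tree. The standard trick is to pick a color $u^* \in U_a$ that is strictly $<_a$-below every color already appearing in $A$ and $B$ (enlarging $U_a$ within $\bk_a$ if necessary, which the induction hypothesis permits since $\bk_a$ has no least-element constraint forced on it), and set $F(v_0, v_1) = u^*$ for all mixed pairs. Then for a mixed triple the two mixed edges both receive $u^*$ while the pure edge receives something $\ge_a u^*$, so the ``otherwise'' clause of ($\bigstar$) is satisfied; for triples with at most one mixed pair one checks the minimum-condition directly, using that $u^*$ is strictly below all old colors. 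I would verify the few triple-types (all three vertices on one side; two on one side and one on the other) case by case, each reducing either to ($\bigstar$) already holding in $A$ or $B$, or to the observation that $u^*$ is the unique minimum.

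Finally I would note that $JEP$ follows immediately by amalgamating over the empty structure, which belongs to $\bkb$ by Remark~(4). I expect the amalgamation of the $M$-part via the induction hypothesis to be the formal linchpin, but the genuine obstacle is the color-assignment for mixed edges; the uniform ``fresh minimal color'' choice is what makes ($\bigstar$) robust under amalgamation, and confirming it for the straddling triples is where the verification must be done carefully.
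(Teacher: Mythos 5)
Your overall skeleton matches the paper's: $HP$ is immediate from the downward closure of conditions (1)--(7), $JEP$ comes for free by amalgamating over the empty structure, and $AP$ is handled in stages --- the $M$-parts via the induction hypothesis that $\bk_a$ has $AP$, then the vertices with $F$, then $U_b$ with $P$ and $<_b$. But your central step --- assigning a single fresh color $u^*$, strictly $<_a$-below every color occurring in $A$ and $B$, to \emph{all} mixed pairs --- is wrong, and it fails exactly on the case you set aside as a routine check. Take $w\in V^C$, $v_0\in V^A\setminus V^C$, $v_1\in V^B\setminus V^C$, with $c_0=F^A(w,v_0)$ and $c_1=F^B(w,v_1)$ both defined; this is precisely the ``exactly one mixed pair'' triple type. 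Applying ($\bigstar$) with $w$ as apex: if $c_0\neq c_1$, then $F(v_0,v_1)$ is \emph{forced} to equal $\min\{c_0,c_1\}$, an old color, not $u^*$; if $c_0=c_1$, then $F(v_0,v_1)$ is forced to be strictly $>_a c_0$, whereas $u^*<_a c_0$. So both branches of ($\bigstar$) are violated by your assignment. Your uniform choice is sound only when $V^C=\emptyset$, where the only constraints come from triples with two vertices on one side and those do want a color below all old ones.

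Moreover, no uniform choice of color can be patched in: distinct mixed pairs can be forced to receive distinct old colors (if $F(w,v_0)=c$ and $F(w,v_1)=c'$ with $c<_a c'$, then $F(v_0,v_1)=c$ is forced, while another mixed pair may be forced to a different minimum). The color of a mixed edge must be computed pair by pair from the colors joining $v_0$ and $v_1$ to the common vertices of $V^C$, one must verify that the values forced through different common vertices are mutually consistent, and when a pair is unconstrained (all its colors to $C$-vertices agree) the new color must be chosen strictly \emph{above} the common value --- which may require adding new elements to $U_a$, hence to $M$. This is exactly why the paper's proof notes that $M^{D_2}$ may properly extend $M^{D_1}$, and why it delegates this step to Lemma 4.9 of \cite{So2} rather than to any fresh-minimal-color trick. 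As it stands, your proof of $AP$ has a genuine gap at its key step.
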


\begin{proof} 

The hereditary property follows immediately from the definition of $\bk_b$.  We will sketch the proof just for 
$AP$.  We get $JEP$ for free, since $\emptyset\in \bk_b$.  Let $A,B,C\in \bk_b$ where $C$ is a substructure of $A$ and $B$.  We need an amalgam $D\in \bk_b$ with embeddings $f:A\rightarrow D$ and $g:B\rightarrow D$ such that $f$ and $g$ agree on $C$.  Since $\bk_a$ satisfies $AP$, we can ate (the reducts to $\tau_a$ of $M^A$ and $M^B$ over $M^C$.  Let $D_1$ be the $\tau_b$-structure with $M^{D_1}$ equal to the resulting amalgam, and with $V^{D_1}$ and $U_b^{D_1}$ empty.  We may suppose that $D_1$ extends $M^C$, and that it is disjoint from $V^A\cup U_b^A$ and $V^B\cup U_b^B$.  Let $f_1$ embed $M^A$ into $M^D$, and let $g_1$ embed $M^B$ into $M^D$, where $f_1$ and $g_1$ agree with the the identity function on $M^C$. 

Next, using the argument from Lemma 4.9 of \cite{So2}, we amalgamate $V^A\cup M^{D_1}$ and 
$V^B\cup M^{D_1}$ over $V^C\cup M^{D_1}$, considering these as $\tau_b$-structures, with the appropriate interpretations of $F$.  Let $D_2$ be the amalgam structure, with $U_b^{D_2}$ empty.  We may suppose that $D_2$ extends $D_1$ and that it is disjoint from $U_b^A$ and $U_b^B$.  Let $f_2$ embed $D_1\cup V^A$ into $D_2$ and let $g_2$ embed $D_1\cup V^B$ into $D_2$, where $f_2$ and $g_2$ agree with the identity function on $D_1\cup V^C$.  The argument from Lemma 4.9 of \cite{So2} shows that $V^{D_2} = V^A\cup V^B$.  In the amalgam $D_2$, although no new points are added in forming $V^{D_2}$, it is possible that some new points are added in forming $M^{D_2}$ (that is, $M^{D_2}$ may have elements not in $M^{D_1}$).  

Finally, we amalgamate $D_2\cup U_b^A$ and $D_2\cup U_b^B$ over $D_2\cup U_b^C$, considering these as $\tau_b$ structures, with the appropriate interpretations of $P$.  The amalgam structure is the desired $D$.  We may suppose that $D$ extends $D_2\cup U_b^C$.  We let $f_3$ embed $D_2\cup U_b^A$ into $D$, and we let $g_3$ embed $D_2\cup U_b^B$ into $D$, where $f_3$ and $g_3$ agree with the identity on $D_2\cup U_b^C$.  Then $D$ is an extension of $C$.  Let $f$ be the restriction of $f_3$ to $A$, and let $g$ be the restriction of $g_3$ to $B$.  Then $f$ and $g$ are embeddings that agree on $C$.       
\end{proof}

Assuming that the symbols in $\tau_a$ are each marked by some notation $a'\leq_\mathcal{O} a$, we mark the finitely many new symbols by $b$.  We pass from an index for $\tau_a$ to an index for $\tau_b$.  We can easily pass from a computable representation $\bbk_a$ of $\bk_a$ to a computable representation $\bbk_b$ of 
$\bk_b$.  Say that $(C_i)_{i\in\omega}$ is the sequence of structures given by $\bbk_a$.  For $\bbk_b$, we will have sequence of structures $(D_i)_{i\in\omega}$ such that for each $i = \langle i_1,i_2\rangle$, 
$M^{D_i} = C_{i_1}$.  The other parts of $D_i$, namely, $V^{D_i}$ and $U^{D_i}$, are finite, with the relations 
$F$, $P$, and $<_b$ to be determined.  There are only finitely many symbols whose interpretation in $D_i$ is not determined by~$C_{i_1}$.  

We can compute $E(\bbk_b)$, using $E(\bbk_a)$ and $\bbk_b$.  To determine whether a finite function 
$f$ is an embedding of $D_i$ into $D_j$, we first check whether the appropriate restriction of $f$ 
embeds $C_{i_1}$ into $C_{j_1}$, and we then check that the finitely many further relations, 
involving new elements, are preserved.  From $\bbk_b$ and $E(\bbk_b)$, we compute $D(\mathcal{A}_b)$ 
and $E(\bbk_b,\mathcal{A}_b)$, as in Theorem \ref{ComputableFraisseLimit}.  Thus, we have $I(b)$, 
with 
computable indices for $\tau_b$, $\bbk_b$, $E(\bbk_b)$, $D(\mathcal{A}_b)$, and 
$E(\bbk_b,\mathcal{A}_b)$.  

\begin{thm}
\label{thm:Successor} 

There is a computable Fra\"{i}ss\'{e} limit $\mathcal{A}_b$ of $\bk_b$, with Scott sentence $\phi_b$, such that 

\begin{enumerate}
 
 \item $M^{\mathcal{A}_b}\restriction \tau_a$ is isomorphic to the $\tau_a$-structure $\mathcal{A}_a$,

\item $\phi_b$ characterizes the cardinal $\beths{|b|}$,
 
 \item $(U_b,<_b)$ is a dense linear order without endpoints, and 
 
 \item $\phi_b$ has a model $\mathcal{B}$ of size $\beths{|b|}$ such that 
 $(U_b^{\mathcal{B}},<_b^{\mathcal{B}})$ contains a co-final sequence of order type $\beths{|b|}$. 

\end{enumerate}

\end{thm}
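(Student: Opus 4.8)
The plan is to reduce membership in the class of models of $\phi_b$ to a purely structural condition, and then read off both cardinality bounds from it. Take $\phi_b$ to be the canonical Scott sentence of the countable Fra\"{i}ss\'{e} limit $\mathcal{A}_b$ produced (via Theorem \ref{ComputableFraisseLimit}) in the paragraph preceding the statement. A standard property of the canonical Scott sentence is that $\mathcal{B}\models\phi_b$ if and only if $\mathcal{B}\equiv_{\infty\omega}\mathcal{A}_b$, and by Karp's back-and-forth criterion this holds exactly when $\mathcal{B}$ has age $\bk_b$ and is $\bk_b$-homogeneous (every isomorphism between finite substructures extends, one point at a time). Both ``$\mathcal{B}$ has age $\bk_b$'' and ``$\mathcal{B}$ is $\bk_b$-homogeneous'' are expressible in $L_{\infty\omega}$ and hence preserved under $\equiv_{\infty\omega}$, which is what makes the reduction legitimate. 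I will call such a $\mathcal{B}$ a \emph{$\bk_b$-model}, and phrase everything below in those terms; Lemma \ref{lem:dap} guarantees that $\bk_b$ is an age, so $\mathcal{A}_b$ and these models exist.

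Items (1) and (3) hold in $\mathcal{A}_b$, indeed in every $\bk_b$-model. For (1): the finite substructures of $M^{\mathcal{A}_b}\restriction\tau_a$ are exactly the members of $\bk_a$, and $\bk_b$-homogeneity restricts to $\bk_a$-homogeneity on $M\restriction\tau_a$, so by uniqueness of the Fra\"{i}ss\'{e} limit (Theorem \ref{FTheorem}) we get $M^{\mathcal{A}_b}\restriction\tau_a\cong\mathcal{A}_a$. For (3): $\bk_b$ contains every finite linear order on $U_b$ and allows inserting points below, above, and between, so in the limit $(U_b,<_b)$ is the countable dense linear order without endpoints, and since that property is first order it passes to all models of $\phi_b$. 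For the upper half of (2) I argue in three steps. (i) Let $\phi_a^M$ be $\phi_a$ relativized to $M$ with the $\tau_a$-relations (which live on $M$ by axiom (3) of $\bk_b$); this is an $L_{\omega_1\omega}$-sentence true in $\mathcal{A}_b$ by item (1), hence true in every $\mathcal{B}\equiv_{\infty\omega}\mathcal{A}_b$, so $M^{\mathcal{B}}\restriction\tau_a\models\phi_a$ and therefore $|M^{\mathcal{B}}|\le\beths{|a|}$, in particular $|U_a^{\mathcal{B}}|\le\beths{|a|}$. (ii) Property $(\bigstar)$ makes $(V^{\mathcal{B}},F)$ an ultrametric valued in the linear order $U_a^{\mathcal{B}}$, and by the counting argument of \cite{So2} each vertex is coded by its branch through the tree of $F$-balls, giving $|V^{\mathcal{B}}|\le 2^{|U_a^{\mathcal{B}}|}\le 2^{\beths{|a|}}=\beths{|b|}$. (iii) In any $\bk_b$-model $P$ is a surjection of $V$ onto $U_b$ (the first-order sentence $\forall u\,\exists v\,P(v,u)$ holds in $\mathcal{A}_b$ by homogeneity), so $|U_b^{\mathcal{B}}|\le|V^{\mathcal{B}}|$. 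Since the domain is the disjoint union $V\sqcup M\sqcup U_b$, we conclude $|\mathcal{B}|\le\beths{|b|}$, so $\phi_b$ has no model of size $\beths{|b|}^+$.

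For the lower half of (2) together with (4), I exhibit one $\bk_b$-model of size $\beths{|b|}$ whose order $(U_b,<_b)$ carries a cofinal sequence of order type $\beths{|b|}$. Start from the model $\mathcal{B}_a\models\phi_a$ of size $\beths{|a|}$ supplied by induction hypothesis (b), in which $(U_a,<_a)$ has a cofinal sequence $(u_\xi)_{\xi<\beths{|a|}}$ of order type $\beths{|a|}$, and use $\mathcal{B}_a$ as the $M\restriction\tau_a$-part. Following \cite{So2}, build $V$ as the set of branches of the full binary tree of height $\beths{|a|}$ and set $F(s,t)=u_{\Delta(s,t)}$, where $\Delta(s,t)$ is the least coordinate at which $s,t$ differ; a direct check gives $(\bigstar)$ (equal splitting coordinates force agreement at that coordinate, hence a strictly larger $F$-value), and $|V|=2^{\beths{|a|}}=\beths{|b|}$. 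Then complete the structure by a transfinite union-of-chain construction of length $\beths{|b|}$ that realizes every one-point $\bk_b$-extension cofinally, thereby securing age $\bk_b$ and $\bk_b$-homogeneity, while taking $(U_b,<_b)$ to be a dense linear order without endpoints of size $\beths{|b|}$ carrying a cofinal $\beths{|b|}$-sequence (e.g.\ an increasing $\beths{|b|}$-sum of copies of $\mathbb{Q}$) with $P$ surjective onto it; density and absence of endpoints are all that the order part of the back-and-forth requires. The resulting $\mathcal{B}$ is a $\bk_b$-model, hence a model of $\phi_b$, of size exactly $\beths{|b|}$ with the required cofinal sequence.

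I expect the main obstacle to be the two-sided cardinality computation and its coupling with the induction. On the upper side one must marry the ultrametric counting lemma of \cite{So2} with the propagation of $\phi_a$ to the reduct $M\restriction\tau_a$ via relativization and $\equiv_{\infty\omega}$; on the lower side one must manufacture the full family of $2^{\beths{|a|}}$ vertices out of the cofinal $\beths{|a|}$-sequence of hypothesis (b), and, crucially, arrange $(U_b,<_b)$ so that it again carries a cofinal $\beths{|b|}$-sequence, since that is precisely the datum the next successor step will consume. The most delicate bookkeeping is checking that the large constructed $\mathcal{B}$ is genuinely $\bk_b$-homogeneous, so that the reduction to $\mathcal{B}\equiv_{\infty\omega}\mathcal{A}_b$, and thence to $\mathcal{B}\models\phi_b$, applies; this is routine given Lemma \ref{lem:dap} but must be carried out with care for the three interacting sorts $V$, $M$, and $U_b$.
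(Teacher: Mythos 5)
Your proposal is correct and takes essentially the same route as the paper: the paper's proof of clause (2) is precisely a citation of Theorems 4.13--4.14 of \cite{So2} ``modified to include $U_b$ and $P$,'' its clause (3) is the standard dense-linear-order limit argument, and its clause (4) is a transfinite chain of $\beths{|b|}$-many amalgamations, and your argument simply unfolds those same ingredients (the rich-model characterization of models of $\phi_b$, relativization of $\phi_a$ to $M$, the \cite{So2} counting bound $|V|\le 2^{|U_a|}$, the binary-tree model built from the cofinal sequence of induction hypothesis (b), and closure under one-point $\bk_b$-extensions). The difference is one of detail, not of method.
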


\begin{proof} 

Clause (1) follows from the inductive hypothesis. Clause (2) follows from the proofs of Theorems 4.13 and 4.14 in \cite{So2}\footnote{See also Remark 4.28 and Theorem 4.29 in \cite{So2}.}, modified to include $U_b$ and $P$. As in the proof of Lemma \ref{lem:dap}, $U_b^D$ and $P^D$ are defined in the amalgam to be the union of $U_b^B\cup U_b^C$ and $P^B\cup P^C$, respectively.  The proof goes through because of disjoint amalgamation. 

Clause (3) follows from the usual proof that the Fra\"{i}ss\'{e} limit of finite linear orders yields a dense linear order without endpoints. Clause (4) follows from the fact that we can organize a sequence of $\beths{|b|}$-many amalgamation triples \linebreak
$(A_i,B_i,C_i)_{|i<\beths{|b|}}$, where the structures $A_i$, $B_i$, and $C_i$ are linearly ordered, $A_i$ and $B_i$ are finite, $C_i$ may be infinite, and $C_{i+1}$ is the amalgam of $B_i,C_i$ over $A_i$, so that $C_*=\bigcup_{i<\beths{|b|}} C_i$ contains a co-final sequence of order type $\beths{|b|}$. 
 \end{proof}

\subsection{Limit Ordinals}
\label{sec:Limit}

Assume $|a|$ is a limit ordinal.  Then $a$ has the form $3\cdot 5^e$, where $\varphi_e$ is a total recursive function with values 
$\varphi_e(n) = a_n$ such that $a_n <_\O a_{n+1}$ for all $n$, and $|a| = \lim_{n} |a_n|$. Without loss of generality, we may suppose that $|a_n|$ is a successor ordinal. 

Let $\tau_a$ be the union of the $\tau_{a_n}$'s, together with the new unary predicates $U_a$ and $Q_n$, for $n\in\omega$, and the new binary predicate $<_a$. Let $\bk_a$ consist of all $\tau_a$-structures (with universe a finite subset of $\omega$) such that 

\begin{enumerate}
 
\item $Q_n\restriction\tau_{a_n}$ is a structure in $\bk_{a_n}$,

\item the relations in $\tau_{a_n}$ are void outside $Q_n$,

\item $U_a = \bigcup_n U_{a_n}$, where each $U_{a_n}$ is the subset of $Q_n$ linearly 
ordered by $<_{a_n}$, and 

\item for $u_0,u_1\in U_a$, we define $u_0<_a u_1$ iff either there exists $n$ such that $Q_n(u_0)$ and 
$Q_n(u_1)$ and $u_0<_{a_n} u_1$, or there exist $n,m$ with $n < m$ such that $Q_n(u_0)$ and $Q_m(u_1)$.

\end{enumerate}  

Clearly, $\bk_a$ satisfies $HP$, $JEP$, and $AP$.  Given indices for $\bbk_{a_n}$ and $E(\bbk_{a_n})$, for all $n$, we can produce a computable representation $\bbk_a$ of $\bk_a$ such that $E(\bbk_a)$ is also computable.  We partition $\omega$ into disjoint sets $Q_n$.  For each $n$, let $p_n$ be the function mapping the elements of $\omega$ $1-1$ onto the elements of $Q_n$, in order.  We have a computable list of all finite partial functions $(\sigma_i)_{i\in\omega}$ from $\omega$ to $\omega$.  Let $(C^{a_n}_i)_{i\in\omega}$ be the sequence of structures given by $\bbk_{a_n}$, and let $(C^a_j)_{j\in\omega}$ be the sequence of structures given by $\bbk_a$.  We use $\sigma_i$ to determine $C^a_i$.  For each $n\in dom(\sigma_i)$, we put into $Q^{C_i}_n$ a copy of the structure $C^{a_n}_j$, where $j = \sigma_i(n)$.  As our isomorphism taking 
$C^{a_n}_j$ to the copy, we take the restriction of $p_n$ to the universe of $C^{a_n}_j$.  We complete the structure $C^a_i$ in the only way possible, letting 
$U^{C^a_i}_a$ be the union of the sets 
$U^{C^{a_n}_j}_{a_n}$, for $j = \sigma_i(n)$, and defining the ordering $<_a$ as prescribed.

It is clear that $\bbk_a$ is a computable representation of $\bk_a$.  Moreover, $E(\bbk_a)$ is computable.  We have 
$(i,j,f)\in E(\bbk_a)$ iff $dom(\sigma_i)\subseteq dom(\sigma_j)$, and for each $n\in dom(\sigma_i)$, if $\sigma_i(n) = i'$ and $\sigma_j(n) = j'$, and $f'$ is the finite partial function such that $f\circ p_n = p_n\circ f'$, then $(i',j',f')\in E(\bbk_{a_n})$.  From $\bbk_a$ and $E(\bbk_a)$, we obtain the Fra\"{i}ss\'{e} limit $D(\mathcal{A}_a)$ and $E(\bbk_a,\mathcal{A}_a)$.   

\begin{enumerate}

\item $Q_n^{\mathcal{A}_a}\restriction\tau_{a_n}\cong\mathcal{A}_{a_n}$,

\item $\phi_a$ characterizes the cardinal $\beths{|a|}$,

\item $(U_a,<_a)$ is a dense linear order without endpoints, and 

\item there is a model $\mathcal{B}$ of size $\beths{|a|}$ such that $(U_a^{\mathcal{B}},<_a^{\mathcal{B}})$ contains a 
co-final sequence of order type $\beths{|a|}$. 

\end{enumerate}
 
The last statement is true because, by the induction hypothesis, for each $n$, there exists a model $\mathcal{B}_{\alpha_n}$ such that $(U^{\mathcal{B}_{\alpha_n}}_{a_n},<_{a_n}^{\mathcal{B}_{\alpha_n}})$ contains a co-final sequence of order type $\beths{|a_n|}$.  Let $\mathcal{B}$ be the  $\tau_a$-structure that agrees with each $\mathcal{B}_{\alpha_n}$ on $Q_n$. Then $<^{\mathcal{B}}_a$ contains a co-final sequence of order type $\beth_{|a|}$.  It also follows from what we have said above that that $\tau_a$ is a computable relational vocabulary and $\mathcal{A}_a$ is a computable $\tau_a$-structure with Scott sentence $\phi$ characterizing $\beth_{|a|}$.  To complete the proof of Theorem \ref{ComputableBeths}, we observe the following.

\begin{cor}

We have a partial computable function $I$ that, for each $a\in \O$, gives indices for $\tau_a$, $\bbk_a$, $E(\bbk_a)$, 
$D(\mathcal{A}_a)$, and $E(\bbk_a,\mathcal{A}_a)$, where $\mathcal{A}_a$ has a Scott sentence $\phi_a$ characterizing 
$\beth_{|a|}$.  

\end{cor}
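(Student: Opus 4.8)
The plan is to package the base case, successor step, and limit step already described into a single partial computable function $I$, defined by effective transfinite recursion on Kleene's $\O$. Concretely, I would isolate three uniform effective transformations. First, the base clause simply outputs the fixed quintuple of indices for $\tau_1$, $\bbk_1$, $E(\bbk_1)$, $D(\mathcal{A}_1)$, and $E(\bbk_1,\mathcal{A}_1)$ constructed in the base case. Second, the successor clause is the uniform procedure of Section \ref{sec:Successor}: from the quintuple of indices assigned to $a$ it produces the quintuple for $b = 2^a$, passing first from $\tau_a$ to $\tau_b$, then from $\bbk_a$ to $\bbk_b$, then computing $E(\bbk_b)$ from $E(\bbk_a)$, and finally invoking the uniform procedure of Theorem \ref{ComputableFraisseLimit} to obtain $D(\mathcal{A}_b)$ and $E(\bbk_b,\mathcal{A}_b)$. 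Third, the limit clause is the uniform procedure of Section \ref{sec:Limit}: from a code $e$ with $a = 3\cdot 5^e$ and from the quintuples assigned to the notations $a_n = \phi_e(n)$, it produces the quintuple for $a$, again ending with an application of Theorem \ref{ComputableFraisseLimit}.

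The key point is that each of these three transformations is effective and depends only on data attached to notations strictly below $a$ in $\koo$. I would therefore define a single partial computable functional $\Phi(g,a)$ that first inspects the notation $a$: if $a = 1$ it returns the base quintuple; if $a = 2^c$ it runs the successor clause on the value $g(c)$; and if $a = 3\cdot 5^e$ it runs the limit clause on the values $g(\phi_e(n))$. Here $g$ is a partial computable oracle standing for $I$ restricted to smaller notations. By the effective transfinite recursion theorem along $\O$ (an application of the Kleene recursion theorem), there is a partial computable $I$ with $I(a) \simeq \Phi(I,a)$ for all $a$.

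Correctness then follows by transfinite induction on $\koo$. For the base case, the stated properties of $\mathcal{A}_1$ hold by construction. For a successor notation $b = 2^a$, the induction hypothesis supplies the quintuple for $a$ together with properties (a) and (b), and Theorem \ref{thm:Successor} upgrades these to the corresponding properties for $b$, in particular that $\phi_b$ characterizes $\beths{|b|}$. For a limit notation $a = 3\cdot 5^e$, the induction hypothesis supplies the quintuples and cofinality data for each $a_n$, and the analysis of Section \ref{sec:Limit} shows that $\bbk_a$ is a computable representation with $E(\bbk_a)$ computable, that $\mathcal{A}_a$ has the required form, and that $\phi_a$ characterizes $\beths{|a|}$. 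Since the recursion is defined for every $a \in \O$, the function $I$ is defined on all of $\O$ and outputs the desired quintuple of indices.

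The main obstacle is not the mathematics of the individual cases --- those are handled by Lemma \ref{lem:dap}, Theorem \ref{thm:Successor}, and the limit analysis --- but the recursion-theoretic bookkeeping that makes the fixed point legitimate. I would need to check that each clause of $\Phi$ queries $g$ only at notations $c \koo a$, so that no circularity arises and the effective transfinite recursion theorem applies cleanly; that the marking of symbols by notations $a' \leq_\O a$ remains coherent as the vocabularies accumulate along the recursion; and that $I$ is in fact defined at every notation in $\O$, which is exactly what the transfinite induction establishes. Once this packaging is verified, the corollary is immediate, and with it the proof of Theorem \ref{ComputableBeths}.
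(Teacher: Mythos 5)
Your proposal is correct and takes essentially the same approach as the paper: the paper defines $I$ by computable transfinite recursion on notations in exactly this way, combining the base case, the successor construction of Section \ref{sec:Successor}, and the limit construction of Section \ref{sec:Limit}, each terminating in the uniform procedure of Theorem \ref{ComputableFraisseLimit}, with correctness established by induction along $\koo$. The only difference is that you spell out the recursion-theorem bookkeeping (the functional $\Phi$ and its fixed point) that the paper leaves implicit when it declares the corollary an observation.
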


\begin{cor}
The Hanf number for Scott sentences of hyperarithmetical structures is $\beth_{\omega_1^{CK}}$.  
\end{cor}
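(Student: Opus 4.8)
The plan is to combine an upper bound and a lower bound, exactly as in the proof that the Hanf number for Scott sentences of computable structures equals $\beths{\omegaoneck}$. Writing $S$ for the set of Scott sentences of hyperarithmetical structures and $H(S)$ for its Hanf number, I would establish $H(S)\le\beths{\omegaoneck}$ and $H(S)\ge\beths{\omegaoneck}$ separately and then combine them.

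For the upper bound, let $\phi\in S$ be a Scott sentence of a hyperarithmetical structure $\mathcal{A}$ for a computable vocabulary $\tau$. The key step is to relativize Lemma \ref{lem1}. Taking $A=L_{\omegaoneck}$, the least admissible set containing $\omega$, we have $\mathcal{A},\tau\in A$, and the construction of Lemma \ref{lem1} produces an expanded computable vocabulary $\tau^*$ together with a set $T$ of computable $\Pi_2$ $\tau^*$-sentences that is c.e.\ relative to $\mathcal{A}$, so that $\tau^*,T\in A$ and $\mathcal{B}\models\phi$ iff $\mathcal{B}$ has an expansion satisfying $T$. Let $\phi^*=\bigwedge T\in\lang{A}$. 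Now suppose $\phi$ has models in all infinite cardinalities below $\beths{\omegaoneck}$; in particular it has a model of size $\beths{\alpha}$ for each $\alpha<\omegaoneck$, and each such model expands to a model of $\phi^*$ of the same cardinality. Since $o(A)=\omegaoneck$, Theorem \ref{thm2} applies to $\phi^*$ and yields models of $\phi^*$ in all infinite cardinalities, whose $\tau$-reducts are models of $\phi$. Hence every $\phi\in S$ with models in all cardinalities $<\beths{\omegaoneck}$ has models in all cardinalities, i.e.\ $H(S)\le\beths{\omegaoneck}$.

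For the lower bound, I would use the very structures produced in Theorem \ref{ComputableBeths}: for each $\alpha<\omegaoneck$ there is a \emph{computable}, hence hyperarithmetical, structure $\mathcal{A}_a$ (with $|a|=\alpha$) whose Scott sentence $\phi_a\in S$ characterizes $\beths{\alpha}$ and, from the Fra\"{i}ss\'{e}-limit construction exactly as in the computable case, has models in every infinite cardinality up to and including $\beths{\alpha}$ but none of cardinality $\beths{\alpha}^+$. Given any cardinal $\kappa<\beths{\omegaoneck}=\sup_{\alpha<\omegaoneck}\beths{\alpha}$, choose $\alpha$ with $\kappa\le\beths{\alpha}$; then $\phi_a$ has models in all infinite cardinalities $<\kappa$ but not in all infinite cardinalities, so $\kappa$ fails the defining property of the Hanf number. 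Thus $H(S)\ge\beths{\omegaoneck}$, and combining the two bounds gives $H(S)=\beths{\omegaoneck}$.

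The main obstacle is the upper bound, and specifically the verification that the construction of Lemma \ref{lem1} relativizes so that $T$ lands inside $A=L_{\omegaoneck}$ even though it is only c.e.\ relative to $\mathcal{A}$ rather than genuinely c.e.; this is what lets us treat $\phi^*$ as a sentence of the admissible fragment $\lang{A}$ and invoke Theorem \ref{thm2} with $\gamma=\omegaoneck$. This point is exactly the content of the remark following Lemma \ref{lem1}, so once it is granted the argument is routine. The lower bound requires no new work, since its witnesses are the computable structures of Theorem \ref{ComputableBeths}, which are automatically hyperarithmetical.
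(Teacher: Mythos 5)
Your proposal is correct and follows essentially the same route as the paper: the upper bound comes from relativizing Lemma \ref{lem1} to $A=L_{\omega_1^{CK}}$ (the remark following that lemma) and invoking Theorem \ref{thm2}, and the lower bound comes from the computable, hence hyperarithmetical, structures $\mathcal{A}_a$ of Theorem \ref{ComputableBeths}. The paper's own proof is just a two-sentence citation of these same two facts, so your write-up simply fills in the details the paper leaves implicit.
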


We already remarked that $\beth_{\omega_1^{CK}}$ is an upper bound on the Hanf number.  The computable structures $\mathcal{A}_a$ from the previous section witness that it is also a lower bound.    

\bigskip
\noindent
\textbf{Remark}.  If $A$ is a countable admissible set with ordinal $\gamma$, then the Hanf number for Scott sentences for structures in $A$ is $\beth_\gamma$.  We already remarked that $\beth_\gamma$ is an upper bound.  Using essentially the same construction as in this section, we could determine a function $I$, $\Sigma$-definable in $A$, taking each ordinal $\alpha < \gamma$ to a tuple of elements of $A$, consisting of a vocabulary $\tau_\alpha$, a representation $\bbk_\alpha$ of an age $\bk_\alpha$, the embedding relation $E(\bbk_\alpha)$, the limit structure $\mathcal{A}_\alpha$ obtained effectively from $\bbk_\alpha$ and $E(\bbk_\alpha)$, and the relation $E(\bbk_\alpha,\mathcal{A}_\alpha)$.


\begin{thebibliography}{99}

\bibitem{AshKnight}
C.\ J.\ {Ash} and J.\ F.\ {Knight}, \emph{Computable Structures and the Hyperarithmetical Hierarchy}, Elsevier, Amsterdam, 2000.

\bibitem{CHMM} B.\ Csima, V.\ Harizanov, R.\ Miller, and A.\ Montalb\'{a}n, ``Computability of Fra\"{i}ss\'{e} limits'', \emph{J.\ Symb.\ Logic}, vol.\ 76(2011), pp.\ 66-93. 

\bibitem{G}  S.\ S.\ Goncharov, ``Strong constructivizability of homogeneous models'', \emph{Algebra i Logika}, vol.\ 17(1978), pp.\ 363Ð388. 

\bibitem{H} W.\ Hodges, \emph{A Shorter Model Theory}, Cambridge University Press, 1997.

\bibitem{K} H.\ J.\ Keisler, \emph{Model Theory for Infinitary Logic}, North-Holland, Amsterdam, 1971.

\bibitem{Nadel1}  M.\ Nadel,  ``$L_{\omega_1,\omega}$ and admissible fragments'', in \emph{Model-Theoretic Logics}, ed.\ by K.\ J.\ Barwise and S.\ Feferman, Springer-Verlag, New York, 1985, pp.\ page pp.\ 271-316. 

\bibitem{Nadel2} M.\ Nadel, ``Scott sentences and admissible sets'', \emph{Ann.\ Math.\ Logic}, vol.\ 7(1974), pp.\ 267-294.

\bibitem{P}  M.\ G.\ Peretyat'kin, ``A criterion of strong constructivizability of a homogeneous model'', \emph{Algebra i Logika}, vol.\ 17(1978), pp.\ 436Ð454.

\bibitem{R1}  J.-P.\ Ressayre, ``Boolean valued models and infinitary first order languages'', \emph{Annals of Math.\ Logic}, vol.\ 6(1973), pp.\ 41-92.

\bibitem{R2}  J.-P.\ Ressayre, ``Models with compactness properties relative to an admissible language'', \emph{Annals of Math.\ Logic}, vol.\  11(1977), pp.\ 31-55.

\bibitem{S}  D.\ Scott, ``Logic with denumerably long formulas and finite strings of quantifiers'', \emph{The Theory of Models}, J.\ Addison, L.\ Henkin, and A.\ Tarski (eds.), 1965, Amsterdam: North-Holland, pp.\  329-341. 

\bibitem{So2}  I.\ Souldatos, ``Characterizing the power set by a complete Scott sentence'', \emph{Fund.\ Math.}, vol.\ 6(2013), pp.\ 131-154.  

\end{thebibliography}
\end{document}